\numberwithin{equation}{section}
\newif\ifPDF
\newtheorem{theorem}{Theorem}[section]
\newtheorem{lemma}[theorem]{Lemma}
\newtheorem{definition}[theorem]{Definition}
\newtheorem{remark}[theorem]{Remark}
\newcommand{\bbR}{\mathbb R}
 \newcommand{\cH}{\mathcal H}
\newcommand{\cI}{\mathcal I} 
\newcommand{\cK}{\mathcal K} \newcommand{\cL}{\mathcal L}
\newcommand{\cO}{\mathcal O}
 \newcommand{\cV}{\mathcal V}
\newenvironment{keywords}
{\noindent{\bf Key words.}\small}{\par\vspace{1ex}}
\newenvironment{AMS}
{\noindent{\bf AMS subject classifications 2020.}\small}{\par}
\newcommand{\DELETE}[1]{}
\title{A discontinuous Galerkin method for one-dimensional nonlocal wave problems}
\author{Qiang Du \thanks{Department of Applied Physics and Applied Mathematics, Columbia University, New York, NY 10027, USA. Email: qd2125@columbia.edu} \quad\quad 
Kui Ren \thanks{Department of Applied Physics and Applied Mathematics, Columbia University, New York, NY 10027, USA.  Email: kr2002@columbia.edu} \quad\quad  Lu Zhang\thanks{Department of Computational Applied Mathematics and Operations Research, and Ken Kennedy Institute, Rice University, Houston, TX, 77005, USA.  Email: lz82@rice.edu}  \quad\quad Yin Zhou \thanks{Department of Applied Physics and Applied Mathematics, Columbia University, New York, NY 10027, USA. Email: yz3888@columbia.edu}
} 
\begin{document}

\maketitle

\begin{abstract}
    This paper presents a fully discrete numerical scheme for one-dimensional nonlocal wave equations and provides a rigorous theoretical analysis. To facilitate the spatial discretization, we introduce an auxiliary variable analogous to the gradient field in local discontinuous Galerkin (DG) methods for classical partial differential equations (PDEs) and reformulate the equation into a system of equations. The proposed scheme then uses a DG method for spatial discretization and the Crank-Nicolson method for time integration. We prove optimal $L^2$ error convergence for both the solution and the auxiliary variable under a special class of radial kernels at the semi-discrete level. In addition, for general kernels, we demonstrate the asymptotic compatibility of the scheme, ensuring that it recovers the classical DG approximation of the local wave equation in the zero-horizon limit. Furthermore, we prove that the fully discrete scheme preserves the energy of the nonlocal wave equation. A series of numerical experiments are presented to validate the theoretical findings.
\end{abstract}
 

\begin{keywords}
discontinuous Galerkin method, nonlocal wave equation, error estimates, numerical stability, asymptotic compatibility
\end{keywords}


\begin{AMS}
	45A05, 65M12, 65M60, 65R20
\end{AMS}

\section{Introduction}

In recent decades, nonlocal models have received considerable attention for their ability to capture long-range interactions, making them valuable tools in various scientific and engineering disciplines (see, e.g., \cite{DuHaZhZh-SIAM18,DuZh-JMMS23,Aksoylu-JPNM23,ZhHuDuZh-SIAM17,HeShSeCySi-IJNME23,WaYaZh-arXiv22,EmWe-MMS07,EmWe-CMS07,ChGo-ESAIM18,SachsSchu2013,AksoyluMengesha2010,DayalBhattacharya2007}). Unlike classical partial differential equations (PDEs), nonlocal models incorporate integral operators that account for interactions over a finite spatial neighborhood, allowing them to handle singularities and discontinuities more naturally. A notable application of nonlocal modeling arises in peridynamics, originally introduced by Silling \cite{Silling2000} as a reformulation of classical elastodynamics to model fracture and material damage. In this work, our study is related to a special case of the time-dependent linear peridynamic model of the following form
\begin{equation}\label{EQ:NVW}
\begin{aligned}
   u_{tt} + \cL u &= f \quad \ \ \text{on } \Omega, t>0, \\
   u({\bf x},0) &= u_0 \quad  \ \text{on } \Omega \cup \Omega_\cI, \\
   \cV u &= g \quad\quad  \text{on } \Omega_\cI, t>0,
\end{aligned}
\end{equation}
where $\Omega \subseteq \mathbb{R}^n$ denotes a bounded, open domain; $\cV$ is a linear operator of constraints acting on a volume $\Omega_\cI$ that is disjoint from $\Omega$; and the action of the linear operator $\mathcal{L}$ on the function $u(\mathbf{x})$ is defined as
\begin{equation}\label{eq:nonlocal_operator}
\mathcal{L} u(\mathbf{x}):=2 \int_{\Omega}(u(\mathbf{y})-u(\mathbf{x})) \gamma(\mathbf{x}, \mathbf{y}) d \mathbf{y} \quad \forall \mathbf{x} \in \Omega \subseteq \mathbb{R}^n,
\end{equation}
where $\gamma(\mathbf{x}, \mathbf{y}): \Omega \times \Omega \rightarrow \mathbb{R}$ is a symmetric, non-negative kernel function satisfying $\gamma(\mathbf{x}, \mathbf{y})=\gamma(\mathbf{y}, \mathbf{x}) \geq 0$. The operator $\cL$ is classified as non-local because its evaluation at any point $\bf x$ depends on the values of $u$ at all other points $\mathbf{y} \neq \mathbf{x}$ within the domain. The nonlocality allows $\cL$ to capture long-range interactions, making it fundamentally different from classical differential operators, which rely only on local information at the point $\bf x$. Nonlocal operators of the form \eqref{eq:nonlocal_operator}, including various generalizations of $\cL$, play a crucial role in many scientific and engineering applications. They have been widely used across various domains, including in image analysis for edge detection and denoising (\cite{buades2010image, gilboa2009nonlocal}), in machine learning for graph-based learning and kernel methods (\cite{rosasco2010learning, vishwanathan2010graph}), in phase transition models to describe nonlocal interactions in complex systems (\cite{bates1999integrodifferential, fife2003some}), in nonlocal heat conduction to capture anomalous diffusion phenomena (\cite{bobaru2010peridynamic, wangperi}), and in wave propagation models where long-range interactions and dispersion effects are essential (\cite{Silling2000, weckner2005effect}). These applications underscore the importance of nonlocal formulations in extending classical models to account for long-range dependencies and heterogeneities that are otherwise difficult to capture with purely local differential equations.

To date, extensive research has been done on nonlocal equations involving operators of the form \eqref{eq:nonlocal_operator}. On the theoretical side, various studies have investigated well-posedness, regularity, and convergence properties. For example, \cite{foss2016differentiability} studied a Dirichlet-type (volume-constrained) problem for scalar nonlocal equations, establishing a nonlocal counterpart to classical regularity theorems for elliptic systems. \cite{alali2021fourier} developed a Fourier multiplier framework for analyzing the periodic nonlocal Poisson equation and derived corresponding regularity results. In the context of nonlocal wave equations, \cite{beyer2016class} studied the problem in $\mathbb{R}^n$, proving that the governing operator is a bounded function of its classical counterpart and demonstrating strong resolvent convergence to classical solutions. \cite{dang2024regularity} further investigated the regularity properties of solutions on a periodic torus in $\mathbb{R}^n$. On the numerical side, various discretization techniques and computational methods have been developed in particular for nonlocal wave equations. For instance, \cite{guan2015stability} proposed a fully discretized scheme combining the Newmark method for time integration and piecewise linear finite element methods for spatial discretization in a volume-constrained setting, and analyzed its stability and convergence properties. \cite{coclite2020numerical} investigated high-order spatial discretization techniques for peridynamic wave models in $\mathbb{R}$, using composite quadrature formulas and spectral methods. \cite{alali2020fourier} developed spectral techniques for solving periodic nonlocal time-dependent problems in arbitrary dimensions, using Fourier multipliers of nonlocal Laplace operators to improve accuracy and computational efficiency.  \cite{du2018numerical} further proposed an absorbing boundary condition for nonlocal wave equations in unbounded domains using an integral equation method. For interested readers, we refer to \cite{du2012analysis,d2020numerical} and the references therein for a more comprehensive overview of theoretical developments and numerical methods of nonlocal equations involving operators of the form \eqref{eq:nonlocal_operator}.

The aim of this work is to develop numerical schemes for a nonlocal wave equation belonging to the category of \eqref{EQ:NVW}, together with a rigorous stability and convergence analysis. A key feature of \eqref{EQ:NVW} is its energy conservation property when the volume constraint $\cV$ satisfies certain conditions, as detailed in Appendix B of \cite{du2012analysis}. Therefore, it is crucial that the proposed numerical scheme preserves this property at the discrete level. Another important aspect of  many nonlocal models is their asymptotic behavior in the local limit. For example, as documented in \cite{du2012analysis}, as the support of the kernel function $\gamma$ shrinks to zero, the nonlocal operator $\cL$ approaches the classical local diffusive operator. In turn, the time-dependent nonlocal volume-constrained wave problem \eqref{EQ:NVW} converges to a classical wave equation. This property is crucial for the design of robust numerical schemes. A numerical scheme that correctly captures this limit behavior is called an \textit{asymptotically compatible} (AC) scheme, a concept introduced in \cite{tian2013analysis}. An AC scheme ensures that numerical solutions of the nonlocal model converge to the correct local solution as both the mesh size and the nonlocal interaction domain tend to zero. For further details on AC schemes, the reader is referred to \cite{du2019asymptotically,tian2017conservative}. Furthermore, it is known that the exact solution of the nonlocal wave equation \eqref{EQ:NVW} can exhibit spatial discontinuities for certain kernel functions $\gamma$. This motivates us to use discontinuous Galerkin (DG) methods for the spatial discretization. DG methods are a class of finite element methods that use discontinuous piecewise polynomials for both numerical solutions and test functions. Originally introduced by Reed and Hill in 1973 for neutron transport problems (\cite{ReHi-LASL73}), DG methods have since evolved into many variants, including local DG (\cite{ChouShuXing2014}), interior penalty DG (\cite{GroteSchneebeliSchotzau2006}),  hybridizable DG (\cite{CockburnGopalakrishnanLazarov2009}), energy DG (\cite{RenLuZhou2024}), and more. Their key advantages, such as arbitrary high-order accuracy, element-wise conservation, geometric flexibility, and $hp$-adaptivity, have made them widely applicable in numerous scientific and engineering applications, such as computational fluid dynamics, acoustics, and magnetohydrodynamics (see \cite{cockburn2004discontinuous,nguyen2011high} and references therein). Although many DG methods have been developed for local problems; however, their direct extension to nonlocal problems is not straightforward. This is because nonlocal models lack spatial differential operators, which precludes the use of integration by parts, a fundamental tool in DG formulations. In \cite{tian2015nonconforming}, the authors proposed a nonconforming DG method for nonlocal variational problems; however, the method lacks asymptotic compatibility. For nonlocal diffusion problems, asymptotically compatible penalty DG methods were introduced in \cite{DuJuLuTian-CAMC20, DuJuLuTian-ESAIM24}, where penalty terms involving jumps are constructed. Additionally, a DG method that closely follows the structure of local DG schemes while ensuring asymptotic compatibility was developed in \cite{DuJuLu-MC19}.

Building on these advances, we propose a stable, high-order, and asymptotically compatible DG method for solving the one-dimensional, time-dependent nonlocal wave equation \eqref{EQ:NW}, without introducing any penalty terms. To the best of our knowledge, this is the first study of asymptotic compatible DG methods applied to solve nonlocal wave equations  of the form \eqref{EQ:NVW}. As in 
 \cite{DuJuLu-MC19}, our approach begins with introducing an auxiliary variable to approximate the nonlocal counterpart of the gradient field, analogous to the local DG method for classical PDEs. Then a DG method is used to do the spatial discretization. We establish the stability and convergence of the  semi-discrete DG scheme for the nonlocal model. Moreover, we show that as the support of the kernel function vanishes, the scheme recovers the corresponding local DG scheme of the classical PDE limit for a fixed mesh size. For the time discretization in the numerical implementation, we use the Crank-Nicolson method, resulting in a fully discrete scheme that is implicit in time, unconditionally stable, and preserves the energy associated with the problem at the fully discrete level.

The rest of the paper is organized as follows. In Section \ref{sec:DG_stable}, we introduce the governing equations and formulate their DG discretization. We then analyze the stability of the proposed scheme, as stated in Theorem \ref{Th: stable}. Section \ref{sec:error} introduces projection operators and establishes optimal $L^2$ error estimates for the semi-discrete scheme when the kernel $\gamma$ is radial and belongs to $L_{\text{loc}}^1(\mathbb{R})$ (see Theorems \ref{theorem_ut} and \ref{theorem_u}). In addition, we further demonstrate the asymptotic compatibility of the scheme as presented in Theorem \ref{theorem_asy} for a general kernel $\gamma$. Section \ref{sec:time} extends the analysis to the fully discrete DG method coupled with the Crank-Nicolson time discretization and proves its energy-conserving properties in Theorem \ref{dis_thm}. Numerical experiments validating the theoretical results are presented in Section \ref{sec:numerical}. Finally, we conclude with a summary of our findings in Section \ref{sec:conclusion}.

\section{Semi-discrete DG formulation and its stability}\label{sec:DG_stable}
We consider the following one-dimensional, time-dependent, nonlocal wave equation:
\begin{equation}\label{EQ:NW}
   u_{tt}(x,t) + \cL_{\delta} u(x,t) = 0, \quad {(x,t)} \in (a,b) \times (0,T), 
\end{equation}
subject to periodic boundary and initial conditions:
\[u({x}, 0) = u_0({x}), \quad u_t({x}, 0) = u_1({x}), \ \quad { x} \in (a,b).\]
Here, $u_t$ and $u_{tt}$ are the first- and second-order time derivatives of $u(x,t)$, respectively. The nonlocal operator $\cL_\delta$ is defined as follows
\[\cL_{\delta} u(x,t) := -2 \int^{x+\delta}_{x-\delta} \big(u(y,t)-u(x,t)\big) \gamma(x,y) dy,\]
where $\delta$ is the horizon of the nonlocal operator specifying the domain of spatial interaction, and $\gamma(x,y)$ is a nonnegative, symmetric kernel function quantifying the influence of $u(y,t)$ on $u(x,t)$ within the horizon $\delta$.

 In this paper, we focus on the case where the kernel function is radial, satisfying $s^2\gamma(s) \in L^1_{\text{loc}}(\bbR)$. Specifically, the kernel satisfies
\begin{equation}\label{eq:kernel}
\gamma(x,y) = \gamma(x-y) \quad \text{ and } \quad 0 < C_{\delta} : = \int_{-\delta}^{\delta} s^2 \gamma(s) ds < \infty.
\end{equation}
To establish a connection with the classical local wave equation, we impose the condition $\lim_{\delta \rightarrow 0} C_\delta = 1$. Under this scaling, the nonlocal problem \eqref{EQ:NW} asymptotically converges to the following local wave equation:
\begin{equation}\label{EQ:wave}
u_{tt}(x,t) - u_{xx}(x,t) = 0, \quad (x,t) \in (a,b)\times(0,T],
\end{equation}
where $u_{xx}$ is the second-order spatial derivative of $u(x,t)$. This transition emphasizes the consistency of the nonlocal model with the classical wave equation as $\delta \rightarrow 0$, thus strengthening the validity of the nonlocal formulation under appropriate scaling of the kernel function.

\subsection{Spatial semi-discrete DG formulation}
\label{sec:semi_discrete}

In this section we develop a spatial semidiscrete DG formulation for the problem \eqref{EQ:NW}. For the spatial discretization, we adopt the formulation proposed in \cite{DuJuLu-MC19}. Let $s\in(0,\delta]$ and rewrite \eqref{EQ:NW} as
\begin{equation}\label{eq:NW3}
\begin{aligned}
u_{tt} - 2 \int^{\delta}_{0} s^2 \gamma(s) \frac{1}{s} \big(q(x,t;s) - q(x-s,t;s)\big) ds &= 0,\\
q(x,t;s) - \frac{1}{s} \big(u(x+s,t) - u(x,t)\big) &= 0,
\end{aligned}
\end{equation}
where we used the fact that
\[\int^{x+\delta}_{x-\delta} \big(u(y,t)-u(x,t)\big) \gamma(x,y) dy = \int^{\delta}_{0} \big(u(x+s,t) - 2u(x,t) + u(x-s,t)\big) \gamma(s) ds.\]

 To proceed, we divide the interval $I = (a,b)$ into non-overlapping subintervals $I_j = \big(x_{j-\frac{1}{2}}, x_{j+\frac{1}{2}}\big)$, where $a = x_{\frac{1}{2}} < x_{\frac{3}{2}} < \cdots < x_{N+\frac{1}{2}} = b$. We assume that the partition is regular, i.e., there exists a positive constant $\nu$ such that $\nu \max_j h_j \leq \min_j h_j$, where $h_j$ denotes the length of the subinterval $I_j$. Let $h:=\max_j h_j$, and on each subinterval $I_j$ we approximate the pair $(u,q)$ by its discrete counterparts $(u_h,q_h)$, each belonging to the space of piecewise polynomials of degree $k$:
\[V_h^k:= \{v_h({x})\big| v_h({x})\in\mathcal{P}^k(I_j), {x}\in I_j\quad \forall I_j\subset I\}.\]
Following \cite{DuJuLu-MC19}, we also define the operators $\cH_j$ and $\cK_j$ on each subinterval $I_j$ as follows:
\begin{equation}\label{eq:hk}
\cH_{j}(v,w;s) := \int_{I_j} \frac{1}{s} \big(v(x+s) - v(x)\big) w(x) dx,\quad \cK_{j}(v,w;s) := \int_{I_j} \frac{1}{s} \big(v(x) - v(x-s)\big) w(x) dx.
\end{equation}
With these definitions, we can formulate the spatial semi-discrete DG scheme for the reformulated problem \eqref{eq:NW3} as follows: find $(u_h, q_h) \in V_h^k \times V_h^k$ such that for any $(v_h, w_h) \in V_h^k \times V_h^k$,
\begin{equation}\label{EQ:DG_scheme}
\begin{aligned}
\int_{I_j}(u_h)_{tt} v_h dx - 2 \int_0^\delta s^2 \gamma(s) \cK_j(q_h,v_h;s) ds &= 0,\\
\int_{I_j} q_h w_h dx - \cH_j(u_h,w_h;s) &= 0.
\end{aligned}
\end{equation}

\begin{remark}\label{remark:reformulation2}
   In addition to the reformulation \eqref{eq:NW3}, an alternative approach is to introduce an auxiliary variable with $q(x,t; s) := \frac{1}{s}\big(u(x,t) - u(x-s,t)\big)$, which reformulates \eqref{EQ:NW} into
\[u_{tt} - 2 \int^{\delta}_{0} s^2 \gamma(s) \frac{1}{s} \big(q(x+s,t;s) - q(x,t;s)\big) ds = 0.\]
The corresponding spatial semidiscrete DG scheme for this alternative formulation is: find $(u_h, q_h) \in V_h^k \times V_h^k$ such that for any $(v_h, w_h) \in V_h^k \times V_h^k$,\begin{equation}\label{eq:DG_scheme1}\begin{aligned}\int_{I_j}(u_h)_{tt} v_h dx - 2 \int_0^\delta s^2 \gamma(s) \cH_j(q_h,v_h;s) ds &= 0,\\\int_{I_j} q_h w_h dx - \cK_j(u_h,w_h;s) &= 0.\end{aligned}
\end{equation}
\end{remark}

\begin{remark}\label{remark:limit}
For any $\eta \in V_h^k$, let $\eta^-$ and $\eta^+$ represent the left and right limits of $\eta$ at $x_{j+\frac{1}{2}}$, respectively. When $h_j$ is fixed and $\delta \rightarrow 0^+$, the DG scheme \eqref{EQ:DG_scheme} and \eqref{eq:DG_scheme1} become:
\begin{equation}\label{eq:LDG}
\begin{aligned}
\int_{I_j} (u_h)_{tt} v_h dx - q_h (x,t;0^+) (v_h)_x dx - (\hat{q}_h)_{j+\frac{1}{2}} (v^{-}_h)_{j+\frac{1}{2}} + (\hat{q}_h)_{j-\frac{1}{2}} (v^{+}_h)_{j-\frac{1}{2}} &= 0, \\
\int_{I_j} q_h (x,t;0^+) w_h dx - u_h (w_h)_x dx + (\hat{u}_h)_{j+\frac{1}{2}} (w^{-}_h)_{j+\frac{1}{2}} - (\hat{u}_h)_{j-\frac{1}{2}} (w^{+}_h)_{j-\frac{1}{2}} &= 0,
\end{aligned}
\end{equation}
where the numerical fluxes $(\hat{q}_h)_{j+\frac{1}{2}}, (\hat{u}_h)_{j+\frac{1}{2}}$ are given by
\begin{equation*}
\left\{
\begin{aligned}
(\hat{q}_h)_{j+\frac{1}{2}} &:= q_h\big(x^{+}_{j+\frac{1}{2}},t;0^{+}\big)\\(\hat{u}_h)_{j+\frac{1}{2}} &:= u_h \big(x^{-}_{j+\frac{1}{2}},t\big)
\end{aligned} \right.\quad \text{  and  } \quad\left\{ \begin{aligned}
(\hat{q}_h)_{j+\frac{1}{2}} &:= q_h\big(x^{-}_{j+\frac{1}{2}},t;0^{+}\big) \\(\hat{u}_h)_{j+\frac{1}{2}} &:= u_h \big(x^{+}_{j+\frac{1}{2}},t\big)
\end{aligned}\right.
\end{equation*}
for the DG schemes \eqref{EQ:DG_scheme} and \eqref{eq:DG_scheme1}, respectively. These are local DG schemes for the classical wave equation \eqref{EQ:wave}, using different alternating fluxes. 
\end{remark}

 The remainder of this paper will focus on the reformulation \eqref{eq:NW3} and its corresponding DG formulation \eqref{EQ:DG_scheme}. The analysis of the alternative approach described in Remark \ref{remark:reformulation2} follows a similar methodology and will be omitted for brevity.

\subsection{Energy conservation}\label{sec:stability}

In this section, we establish the stability of the DG scheme \eqref{EQ:DG_scheme}. Before proceeding, we present a lemma that plays a crucial role in both stability and error analysis.

\begin{lemma}\label{lemma1} 
(\cite{DuJuLu-MC19}) Let the operators $\cH_j$ and $\cK_j$ be defined as in \eqref{eq:hk}, and denote by
\begin{equation}\label{eq:def_KH}
\mathcal{K}(w,v;s) := \sum_j \mathcal{K}_j(w,v;s),\quad \mathcal{H}(v,w;s) := \sum_j\mathcal{H}_j(v,w;s).
\end{equation}
Then, the following identities hold for any $v, w \in V_h^k, s > 0$:
\[\cH(v,w;s) + \cK(w,v;s) = 0, \quad \cH(v,v;s) = - \frac{1}{2} \int_a^b \frac{1}{s} \big(v(x+s) - v(x)\big)^2 dx,\]
and
\[\cK(w,w;s) = \frac{1}{2} \int_a^b \frac{1}{s} \big(w(x+s) - w(x)\big)^2 dx.
\]
\end{lemma}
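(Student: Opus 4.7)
The plan is to exploit the fact that the subintervals $\{I_j\}$ form a partition of $(a,b)$, so the sums collapse to single integrals:
$\mathcal{H}(v,w;s) = \int_a^b \tfrac{1}{s}(v(x+s) - v(x)) w(x)\,dx$ and
$\mathcal{K}(w,v;s) = \int_a^b \tfrac{1}{s}(w(x) - w(x-s)) v(x)\,dx$.
Once in this form, the three identities reduce to elementary manipulations that use only a translation change of variables together with the periodic boundary condition (which provides a periodic extension of $v, w \in V_h^k$ so that $v(x+s)$ and $w(x-s)$ make sense for all $x \in (a,b)$).

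For the first identity, the plan is to split each integrand into two pieces, say $\mathcal{H}(v,w;s) = \tfrac{1}{s}\int_a^b v(x+s)w(x)\,dx - \tfrac{1}{s}\int_a^b v(x)w(x)\,dx$, and likewise for $\mathcal{K}(w,v;s)$. Applying the substitution $y = x - s$ in the term $\int_a^b w(x-s)v(x)\,dx$ and invoking periodicity turns it into $\int_a^b w(y)v(y+s)\,dy$. The remaining four terms then cancel in pairs, giving $\mathcal{H}(v,w;s) + \mathcal{K}(w,v;s) = 0$.

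For the second identity, I would set $w = v$ in the expression for $\mathcal{H}$ and compare with the right-hand side obtained from expanding $(v(x+s) - v(x))^2 = v(x+s)^2 - 2 v(x+s)v(x) + v(x)^2$. The same translation-plus-periodicity argument shows $\int_a^b v(x+s)^2\,dx = \int_a^b v(x)^2\,dx$, so the expanded right-hand side simplifies to $\int_a^b \tfrac{1}{s}(v(x+s)v(x) - v(x)^2)\,dx$, which matches $\mathcal{H}(v,v;s)$. The third identity follows by an identical calculation, with the opposite overall sign coming from the opposite shift direction built into the definition of $\mathcal{K}_j$.

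The only real subtlety is being explicit about the periodic extension: elements of $V_h^k$ are defined piecewise on $(a,b)$ and are generally discontinuous across interfaces, so the shift $v(x+s)$ can land at a point where $v$ has a jump and, for $x+s > b$, must be interpreted via the periodicity assumption of the problem. This is a bookkeeping issue rather than a genuine analytical obstacle, so once that convention is fixed, the proof is a short direct verification. The result is established in \cite{DuJuLu-MC19}, and I would simply reproduce it in the form above.
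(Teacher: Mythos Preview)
Your proposal is correct and is exactly the standard direct verification via periodicity and the change of variables $y=x\pm s$. The paper does not give its own proof of this lemma at all---it simply cites \cite{DuJuLu-MC19}---so your short computation is precisely what one would supply if asked to fill in the details.
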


 We are now ready to establish the stability of the scheme \eqref{EQ:DG_scheme}, as shown in the following theorem.

\begin{theorem}\label{Th: stable}
Let the kernel $\gamma(x,y)$ satisfy the condition \eqref{eq:kernel}, and let $(u_{h},q_h) \in V_{h}^k \times V_h^k$ be the DG solution computed by the scheme \eqref{EQ:DG_scheme}. Define the discrete energy $E_h(t)$ as
\[{E}_h(t) := \int_a^b  \frac{1}{2}\big((u_h(x,t))_t\big)^2 dx + \int_0^\delta s^2 \gamma(s) \int_a^b   q_h^2(x,t;s) dx ds.\]
Then, $E_h(t)$ is conserved, namely,
\begin{equation*}\label{eq:discrete_stable}
    E_h(t) = E_h(0) \quad \forall t > 0.
\end{equation*}
\end{theorem}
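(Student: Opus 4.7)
The plan is to show that $\frac{d}{dt} E_h(t) \equiv 0$ by taking the time derivative of $E_h$, substituting in the two equations of the DG scheme \eqref{EQ:DG_scheme} with appropriately chosen test functions, and then invoking the antisymmetry identity $\mathcal{H}(v,w;s) + \mathcal{K}(w,v;s) = 0$ from Lemma \ref{lemma1}. The structure is the standard energy argument for DG discretizations of wave-type problems, adapted to the nonlocal setting via the operators $\mathcal{H}_j, \mathcal{K}_j$.

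First, I would differentiate $E_h(t)$ directly under the integral signs to obtain
\[
\frac{d}{dt} E_h(t) = \int_a^b (u_h)_t (u_h)_{tt}\, dx + 2 \int_0^\delta s^2 \gamma(s) \int_a^b q_h (q_h)_t\, dx\, ds.
\]
To handle the first term, I would take the test function $v_h = (u_h)_t \in V_h^k$ in the first equation of \eqref{EQ:DG_scheme}, obtaining
\[
\int_{I_j} (u_h)_{tt} (u_h)_t\, dx = 2 \int_0^\delta s^2 \gamma(s)\, \mathcal{K}_j(q_h, (u_h)_t; s)\, ds,
\]
and then summing over $j$ and using the notation \eqref{eq:def_KH} to write this as $2 \int_0^\delta s^2 \gamma(s)\, \mathcal{K}(q_h, (u_h)_t; s)\, ds$.

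For the second term, I would first differentiate the second equation of \eqref{EQ:DG_scheme} in $t$ (valid since the equation holds pointwise in $t$ and $s$), giving $\int_{I_j} (q_h)_t w_h\, dx = \mathcal{H}_j((u_h)_t, w_h; s)$. Then choosing $w_h = q_h(\cdot,t;s) \in V_h^k$ and summing over $j$ yields
\[
\int_a^b q_h (q_h)_t\, dx = \mathcal{H}((u_h)_t, q_h; s),
\]
which I then multiply by $2 s^2 \gamma(s)$ and integrate over $s \in (0,\delta]$; the resulting integral is well defined because $s^2 \gamma(s) \in L^1_{\mathrm{loc}}(\mathbb{R})$ by \eqref{eq:kernel}. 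Assembling the two pieces gives
\[
\frac{d}{dt} E_h(t) = 2 \int_0^\delta s^2 \gamma(s)\, \bigl[\mathcal{K}(q_h, (u_h)_t; s) + \mathcal{H}((u_h)_t, q_h; s)\bigr]\, ds,
\]
and the bracketed quantity vanishes identically by the first identity in Lemma \ref{lemma1}. Integrating in $t$ from $0$ to $t$ then gives the conservation $E_h(t) = E_h(0)$.

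The main (minor) obstacle I anticipate is making sure the differentiation under the $s$-integral in the potential term is justified and that the substitution $w_h = q_h(\cdot,t;s)$ is legitimate pointwise in $s$; this is where the integrability assumption \eqref{eq:kernel} together with the finite-dimensionality of $V_h^k$ (so that $q_h(\cdot,t;s)$ depends smoothly on its arguments) is used. Beyond that, the argument is a direct algebraic consequence of Lemma \ref{lemma1}, mirroring the way test-function choices $v_h = (u_h)_t$ and $w_h = q_h$ produce cancellation in classical local DG energy analyses.
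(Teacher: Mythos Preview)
Your proposal is correct and follows essentially the same approach as the paper: choose $v_h = (u_h)_t$ in the first equation, time-differentiate the second equation and choose $w_h = q_h$, then sum over cells and apply the antisymmetry identity $\mathcal{H}(v,w;s)+\mathcal{K}(w,v;s)=0$ from Lemma~\ref{lemma1} to conclude $\frac{d}{dt}E_h(t)=0$. The only cosmetic difference is that you first differentiate $E_h(t)$ and then substitute, whereas the paper first manipulates the scheme and then identifies the result as $\frac{d}{dt}E_h(t)$.
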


\begin{proof}
Taking the time derivative of the second equation in \eqref{EQ:DG_scheme}, then multiplying the resulting equation by $2s^2\gamma(s)$, and integrating over $s$ in the interval $(0,\delta)$, we get 
    \begin{equation}\label{eq:stable_aux}
       \int_0^\delta 2s^2 \gamma(s) \int_{I_j} (q_h)_t w_h dx - 2\cH_j \big((u_h)_t, w_h; s\big) ds = 0.
    \end{equation}
Next, by choosing $(v_h, w_h) = \big((u_h)_t, q_h\big)$ in \eqref{EQ:DG_scheme} and \eqref{eq:stable_aux}, and summing the resulting two equations over all subintervals, we have
    \begin{equation*}
    \frac{d}{dt}E_h(t) = 2 \int_0^\delta s^2 \gamma(s) \Big(\cK (q_h, (u_h)_t; s) + \cH\big( (u_h)_t, q_h; s\big)\Big) ds = 0,
    \end{equation*}
where the last equality follows from Lemma \ref{lemma1}, thus completing the proof.
\end{proof}

 In the following, we discuss the error analysis for the proposed DG scheme \eqref{EQ:DG_scheme}. Let $C$ be a universal positive constant, independent of the mesh size $h$, although its value may vary between different lines. We also introduce the standard notation for Sobolev spaces as follows: For any integer $s>0$, let $W^{s, p}(I_j)$ denote the standard Sobolev space on subinterval $I_j$, with norm $\|\cdot\|_{s, p, I_j}$ and seminorm $|\cdot|_{s, p, I_j}$. For $p=2$ we denote $W^{s, p}(I_j)$ as $H^s(I_j)$, with corresponding norms $\|\cdot\|_{s, p, I_j}=\|\cdot\|_{H^s(I_j)}$ and seminorms $|\cdot|_{s, p, I_j}=|\cdot|_{H^s(I_j)}$. For $p=0$ we use the standard $L^s$ space with the norm $\|\cdot\|_{s, p, I_j}=\|\cdot\|_{L^s(I_j)}$. We also define the global norms and seminorms as $|\cdot|_{H^s(a,b)} = \sum_j |\cdot|_{H^s(I_j)}$ and $\|\cdot\|_{L^s(a,b)} = \sum_j \|\cdot\|_{L^s(I_j)}$.

\section{Error estimates}\label{sec:error}

In this section, we derive optimal error estimates for the DG scheme in the $L^2$ norm, assuming that $\gamma(s)\in L_{\text{loc}}^1(\mathbb{R})$ and the horizon $\delta > 0$ being fixed. In addition, we prove suboptimal error estimates in the $L^2$ norm for a more general class of kernel functions. These results not only establish the convergence order of the scheme, but also show that the proposed DG scheme is asymptotically compatible (see the definition presented in Definition \ref{def:asym}). In Section \ref{projection}, we first review several projections and their corresponding properties, which are fundamental to our analysis.

\subsection{Projections}\label{projection}

For any $k\geq0$, we define the $L^2$-projection $P_h$ onto $V_h^k$ such that, for any $u\in H^{k+1}(a,b)$, the projection $P_h$ satisfies
\begin{equation}\label{eq:l2_projection}
\int_{I_j} (u - P_h u)v\ dx = 0\quad \quad \forall v\in \mathcal{P}^{k}(I_j), j = 1,\cdots,N.
\end{equation}
In addition, we define the Gauss-Radau projection $P_h^{\pm}$ onto $V_h^k$ as follows: For any $u\in H^{k+1}(a,b)$, the projection $P_h^\pm$ satisfies
\begin{equation}\label{eq:gr_projection}
\begin{aligned}
&\int_{I_j} (u - P_h^\pm u)v\ dx = 0\quad \quad \forall v\in \mathcal{P}^{k-1}(I_j), j = 1,\cdots,N,\\
& P_h^+u(x_{j-\frac{1}{2}}^+) = u(x_{j-\frac{1}{2}}^+) \quad \text{and} \quad P_h^-u(x_{j+\frac{1}{2}}^-) = u(x_{j+\frac{1}{2}}^-),
\end{aligned}
\end{equation}
for $k\geq 1$. When $k=0$, we only require the second line in \eqref{eq:gr_projection} to define the Gauss-Radau projection $P_h^{\pm}$.

Then, from \cite{ciarlet2002finite}, we have the following approximation properties hold: For sufficiently small $h$ there exists a positive constant $C$, dependent of $k$ but independent of the spatial mesh size $h$, such that for any $u\in H^{k+1}(a,b)$,
\begin{equation}\label{projrel}
\|u - P_hu\|_{L^2(a,b)} + \|u - P_hu\|_{L^\infty(a,b)} \leq Ch^{k+1},
\end{equation}
and
\begin{equation}\label{projre_gr}
\|u - P_h^\pm u\|_{L^2(a,b)} + \|u - P_h^\pm u\|_{L^\infty(a,b)} \leq Ch^{k+1}.
\end{equation}
Furthermore, for $v \in V_h^k$, we have the following inverse inequalities (\cite{ciarlet2002finite}):
\begin{equation}\label{eq:inverse}
\|v_x\|_{L^2(a,b)} \leq Ch^{-1}\|v\|_{L^2(a,b)} \quad \text{and} \quad \|v\|_{L^\infty(a,b)} \leq Ch^{-\frac{1}{2}}\|v\|_{L^2(a,b)}.
\end{equation}

\subsection{Optimal error estimate for \texorpdfstring{$\gamma(s) \in L^1_{\text{loc}} (\bbR)$}{} and fixed horizon \texorpdfstring{$\delta$}{}}

We first establish error estimates for the proposed DG scheme \eqref{EQ:DG_scheme} with a fixed horizon $\delta > 0$. To this end, let us denote the errors by
\[\begin{aligned}
\tilde e_u = e_u + \delta_u &:= \big(u_h - u\big) + \big(u - P_h u\big),\\
\tilde e_q = e_q + \delta_q &:= \big(q_h - q\big) + \big(q - P_h q\big),
\end{aligned}\]
where the $L^2$-projection, $P_h$, is defined in \eqref{eq:l2_projection}. Since both the continuous solution $(u,q)$ and the DG solution $(u_h,q_h)$ satisfy the DG scheme \eqref{EQ:DG_scheme}, the following error equations hold for any $(v_h,w_h) \in V_h^k \times V_h^k$:
\begin{equation}\label{eq:error_aux1}
\begin{aligned}
\int_{I_j} (e_u)_{tt} v_h dx - 2 \int_0^\delta s^2 \gamma(s) \mathcal{K}_j\big(e_q, v_h; s\big)ds &= 0,\\
\int_{I_j} e_q w_h dx - \mathcal{H}_j\big(e_u, w_h; s\big) &= 0,
\end{aligned}
\end{equation}
where the operators $\cH_j$ and $\cK_j$ are defined in \eqref{eq:hk}. To proceed, we first derive error estimates for $(u_t,q)$, as shown in the following theorem, and then establish error estimates for $u$.

\begin{theorem}\label{theorem_ut}
Let the horizon $\delta > 0$ be fixed. Assume that the kernel $\gamma(s)$ satisfies the condition \eqref{eq:kernel} and moreover $\gamma(s)\in L_{\text{loc}}^1(\mathbb{R})$. Suppose further that $(u_h, q_h) \in V_h^k \times V_h^k$ is the DG solution of the problem \eqref{eq:NW3}. If the exact solution $(u,q)$ of the problem \eqref{eq:NW3} is sufficiently smooth, then the following optimal error estimate holds:
    \begin{equation}\label{eq:theorem_ut}
    \begin{aligned}
        & \frac{1}{2} \|(e_u)_t\|_{L^2(a,b)}^2   + \int_0^\delta s^2 \gamma(s) \|e_q^2(x,t;s)\|_{L^2(a,b)}^2  ds \le C(\delta)  h^{2k+2}\quad \forall t\in[0,T],
    \end{aligned}
    \end{equation}
where $C(\delta)>0$ is a constant independent of the mesh size $h$, but dependent of the kernel $\gamma(s)$, the horizon $\delta$, and the norms $|u_t(x,t)|_{H^{k+1}(a,b)}$ and $|q(x,t; s)|_{H^{k+1}(a,b)}$.
\end{theorem}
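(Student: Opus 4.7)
The plan is to carry out a standard energy-type error argument tailored to the nonlocal DG setting. I would start from the error equations \eqref{eq:error_aux1} and split each error through the $L^2$-projection of \eqref{eq:l2_projection}: write $e_u = \tilde e_u - \delta_u$ and $e_q = \tilde e_q - \delta_q$, with $\tilde e_u := u_h - P_h u$, $\delta_u := u - P_h u$, and analogously for $q$. After differentiating the second equation of \eqref{eq:error_aux1} in time, I would take $v_h = (\tilde e_u)_t$ in the first equation and $w_h = \tilde e_q$ in the time-differentiated second equation, sum over $j$, multiply the second equation by $2s^2\gamma(s)$, and integrate over $s\in(0,\delta)$. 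Because $(\tilde e_u)_t$ and $\tilde e_q$ lie in $V_h^k$ on each $I_j$, the terms $\int_{I_j}(\delta_u)_{tt}(\tilde e_u)_t\,dx$ and $\int_{I_j}(\delta_q)_t \tilde e_q\,dx$ vanish by the defining property of $P_h$, so the left-hand side becomes the time derivative of
\[
\tilde E_h(t) := \tfrac{1}{2}\|(\tilde e_u)_t\|_{L^2(a,b)}^2 + \int_0^\delta s^2\gamma(s)\,\|\tilde e_q(\cdot,t;s)\|_{L^2(a,b)}^2\,ds.
\]

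Adding the two resulting identities, Lemma \ref{lemma1} would eliminate the skew-symmetric pair $\cK(\tilde e_q,(\tilde e_u)_t;s) + \cH((\tilde e_u)_t,\tilde e_q;s) = 0$, exactly as in the proof of Theorem \ref{Th: stable}. The only surviving contribution on the right-hand side is then the projection-error remainder
\[
-2\int_0^\delta s^2\gamma(s)\big[\cK(\delta_q,(\tilde e_u)_t;s) + \cH((\delta_u)_t,\tilde e_q;s)\big]\,ds.
\]

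The main obstacle is to estimate this remainder sharply. Each of the bilinear forms $\cK$ and $\cH$ naturally splits into an \emph{unshifted} piece, involving $\delta_q(x)$ or $(\delta_u)_t(x)$, and a \emph{shifted} piece, involving $\delta_q(x-s)$ or $(\delta_u)_t(x+s)$. Only the unshifted pieces are annihilated by the $L^2$-projection, because the shifted arguments may reside in neighboring subintervals where orthogonality no longer applies. I would bound the surviving shifted pieces by Cauchy--Schwarz, using the periodicity of $u$ and $q$ to identify $\|\delta_q(\cdot-s)\|_{L^2(a,b)} = \|\delta_q\|_{L^2(a,b)}$, and then invoking the approximation estimate \eqref{projrel} to gain a factor $Ch^{k+1}$. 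The accompanying $1/s$ is absorbed by the weight $s^2\gamma(s)$, leaving $s\gamma(s)$ whose integrability on $(0,\delta)$ is guaranteed precisely by the hypothesis $\gamma\in L^1_{\text{loc}}(\bbR)$; this is where the $\delta$-dependent constant $C(\delta)$ is generated, essentially as $\delta\|\gamma\|_{L^1(0,\delta)}$ combined with $\sup_{s\in(0,\delta)}|q(\cdot,t;s)|_{H^{k+1}(a,b)}$ and $|u_t(\cdot,t)|_{H^{k+1}(a,b)}$.

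Finally, combining these bounds with Young's inequality to redistribute the factors $\|(\tilde e_u)_t\|_{L^2}$ and $\big(\int_0^\delta s^2\gamma(s)\|\tilde e_q\|_{L^2}^2\,ds\big)^{1/2}$ into $\tilde E_h(t)$ would produce a differential inequality of the form $\tfrac{d}{dt}\tilde E_h(t) \le C\,\tilde E_h(t) + C(\delta)\,h^{2k+2}$. Gronwall's lemma, together with the initial bound $\tilde E_h(0)\le Ch^{2k+2}$ obtained by applying \eqref{projrel} to the prescribed initial data, then yields $\tilde E_h(t)\le C(\delta)\,h^{2k+2}$ uniformly on $[0,T]$. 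A triangle inequality $\|(e_u)_t\|_{L^2} \le \|(\tilde e_u)_t\|_{L^2} + \|(\delta_u)_t\|_{L^2}$ (and its analogue for $e_q$), each handled by \eqref{projrel}, converts this into the claimed optimal estimate \eqref{eq:theorem_ut}.
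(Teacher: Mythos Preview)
Your proposal is correct and follows essentially the same energy argument as the paper's proof: the same error splitting via the $L^2$-projection, the same test-function choices $v_h=(\tilde e_u)_t$, $w_h=\tilde e_q$, the same application of Lemma~\ref{lemma1} to cancel the skew-symmetric pair, and Gr\"onwall to close. Your remainder estimate differs only cosmetically---you invoke orthogonality to kill the unshifted piece of $\cK(\delta_q,(\tilde e_u)_t;s)$ and bound only the shifted term (producing a factor $\int_0^\delta s\gamma(s)\,ds$), whereas the paper bounds the full difference $\|\delta_q(\cdot)-\delta_q(\cdot-s)\|_{L^2}$ directly (producing $\int_0^\delta \gamma(s)\,ds$); both routes require exactly the $L^1_{\text{loc}}$ hypothesis and yield the same optimal order.
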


\begin{proof}

Taking the first-order time derivative of the second equation in \eqref{eq:error_aux1}, multiplying the resulting equation by $2s^2\gamma(s)$, and integrating over $(0,\delta)$ with respect to $s$, then summing the resulting equations over all subcells $I_j$, we obtain
\begin{equation}\label{eq:error_aux2}
\begin{aligned}
&\int_a^b (\tilde e_u)_{tt} v_h dx + 2\int_0^\delta s^2 \gamma(s) \int_a^b   (\tilde e_q)_t w_h dx ds \\
=\ & 2 \int_0^\delta s^2 \gamma(s) \Big(\mathcal{K}\big(\tilde e_q - \delta_q, v_h; s\big) + \mathcal{H}\big((\tilde e_u)_t - (\delta_u)_t, w_h; s\big)  \Big)ds,
\end{aligned}
\end{equation}
where we used the fact that $\tilde e_u = e_u + \delta_u$, $\tilde e_v = e_v + \delta_v$, along with the $L^2$-projection property \eqref{eq:l2_projection}. Substituting $v_h$ by $(\tilde e_u)_t$ and $w_h$ by $\tilde e_q$ in \eqref{eq:error_aux2}, and applying Lemma \ref{lemma1}, we get
\begin{equation}\label{eq:error_aux3}
\begin{aligned}
& \frac{d}{dt}\bigg( \frac{1}{2}\|(\tilde e_u)_t\|_{L^2(a,b)}^2 + \int_0^\delta s^2 \gamma(s) \|\tilde e_q(x,t;s)\|_{L^2(a,b)}^2  ds\bigg) \\
= \ & -2 \int_0^\delta s^2 \gamma(s) \Big(\mathcal{K}\big(\delta_q, (\tilde e_u)_t; s\big) + \mathcal{H}\big((\delta_u)_t, \tilde e_q; s\big)  \Big)ds.
\end{aligned}
\end{equation}
To estimate the terms involving $\cK$ and $\cH$ in \eqref{eq:error_aux3}, we use the definitions of $\cK_j$ and $\cH_j$ in \eqref{eq:hk}, apply H\"older's inequality, and utilize the projection property \eqref{projrel} to collect
\begin{equation}\label{eq:error_aux4}
\begin{aligned}
&-2\int_0^\delta s^2 \gamma(s)\mathcal{K}\big(\delta_q, (\tilde e_u)_t; s\big) ds = -2\int_0^\delta \mathcal{K}\Big(s\sqrt{\gamma(s)}\delta_q, s\sqrt{\gamma(s)}(\tilde e_u)_t; s\Big) ds\\
\leq \ & \int_0^\delta \bigg( 2\gamma(s) \|\delta_q(x, t;s) - \delta_q(x-s, t; s)\|_{L^2(a,b)}^2+ \frac{1}{2}s^2 \gamma(s) \|(\tilde e_u)_t\|_{L^2(a,b)}^2\bigg) ds \\
\leq \ & C h^{2k+2}\int_0^\delta \gamma(s) ds +  \frac{1}{2} \|(\tilde e_u)_t\|_{L^2(a,b)}^2, 
\end{aligned}
\end{equation}
and
\begin{equation}\label{eq:error_aux5}
\begin{aligned}
& -2\int_0^\delta s^2 \gamma(s) \mathcal{H}\big((\delta_u)_t, \tilde e_q; s\big) ds  = -2\int_0^\delta \mathcal{H}\Big(s\sqrt{\gamma(s)}(\delta_u)_t, s\sqrt{\gamma(s)}\tilde e_q; s\Big) ds \\
\leq \ & \int_0^\delta \bigg( \gamma(s) \|(\delta_{u})_t(x+s, t;) - (\delta_u)_t(x, t)\|_{L^2(a,b)}^2+ s^2 \gamma(s) \|\tilde e_q(x,t;s)\|_{L^2(a,b)}^2\bigg) ds \\
\leq\  & C h^{2k+2}\int_0^\delta \gamma(s)ds +\int_0^\delta s^2 \gamma(s)  \|\tilde e_q(x,t;s)\|_{L^2(a,b)}^2 ds. 
\end{aligned}
\end{equation}
Substituting \eqref{eq:error_aux4}--\eqref{eq:error_aux5} into \eqref{eq:error_aux3} we arrive at
\begin{equation}\label{eq:error_aux6}
\begin{aligned}
&\frac{d}{dt}\bigg(  \frac{1}{2}\|(\tilde e_u)_t\|_{L^2(a,b)}^2 + \int_0^\delta s^2 \gamma(s) \|\tilde e_q(x,t;s)\|_{L^2(a,b)}^2 ds\bigg)  \\ \leq& \  C(\delta) h^{2k+2} + \frac{1}{2}\|(\tilde e_u)_t\|_{L^2(a,b)}^2 + \int_0^\delta s^2 \gamma(s) \|\tilde e_q(x,t;s)\|_{L^2(a,b)}^2 ds.
\end{aligned}
\end{equation}

\noindent Finally, applying the triangle inequality, the projection property \eqref{projrel}, and Gr\"onwall's inequality yields the desired result \eqref{eq:theorem_ut}.
\end{proof}

Using the result derived in Theorem \ref{theorem_ut}, we now establish the error estimate for $u$.

\begin{theorem}\label{theorem_u}
    Let the horizon $\delta > 0$ be fixed. Assume that the kernel $\gamma(s)$ satisfies the condition \eqref{eq:kernel} and moreover $\gamma(s)\in L_{\text{loc}}^1(\mathbb{R})$. Suppose further that $(u_h, q_h) \in V_h^k \times V_h^k$ is the DG solution of the problem \eqref{eq:NW3}. If the exact solution $(u,q)$ of the problem \eqref{eq:NW3} is sufficiently smooth, then the following optimal error estimate holds:
    \begin{equation}\label{eq:l2eu}
    \begin{aligned}
        &\| e_u(x,t) \|_{L^2(a,b)}^2 \le C(\delta) h^{2k+2} \quad \forall t\in[0,T],
    \end{aligned}
    \end{equation}
where $C(\delta)>0$ is a constant independent of the mesh size $h$, but dependent of $\gamma(s)$, the horizon $\delta$, and the norms $|u_t(x,t)|_{L^\infty([0,T];H^{k+1}(a,b))}$, $|u(x,t)|_{L^\infty([0,T];H^{k+1}(a,b))}$ and $|q(x,t; s)|_{L^\infty([0,T];H^{k+1}(a,b))}$.
\end{theorem}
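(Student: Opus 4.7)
The plan is to obtain the $L^2$ bound on $e_u$ by integrating in time the bound on $\|(e_u)_t\|_{L^2(a,b)}$ already established in Theorem \ref{theorem_ut}, together with a bound on the initial error $e_u(\cdot,0)$ coming from the projection used to initialize the scheme. The idea is that since the nonlocal operator $\mathcal{L}_\delta$ does not directly provide a coercive form on $e_u$ itself (only on $e_q$ and $(e_u)_t$ via Lemma \ref{lemma1}), we cannot expect to close an energy argument for $e_u$ directly; instead we bootstrap from the velocity estimate.

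First, I would write, for each $x\in(a,b)$,
\begin{equation*}
e_u(x,t) = e_u(x,0) + \int_0^t (e_u)_t(x,\tau)\,d\tau,
\end{equation*}
and apply Minkowski's integral inequality to get
\begin{equation*}
\|e_u(\cdot,t)\|_{L^2(a,b)} \le \|e_u(\cdot,0)\|_{L^2(a,b)} + \int_0^t \|(e_u)_t(\cdot,\tau)\|_{L^2(a,b)}\,d\tau.
\end{equation*}
The second term is handled by Theorem \ref{theorem_ut}: taking square roots in \eqref{eq:theorem_ut} gives $\|(e_u)_t(\cdot,\tau)\|_{L^2(a,b)}\le C(\delta)\,h^{k+1}$ uniformly on $[0,T]$, provided the $H^{k+1}$-seminorms of $u_t$ and $q$ are uniformly bounded in time, which is ensured by the $L^\infty([0,T];H^{k+1}(a,b))$ hypotheses in the statement. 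Consequently the time integral is bounded by $T\,C(\delta)\,h^{k+1}$.

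Next, for the initial error, I would choose the standard initialization $u_h(\cdot,0)=P_h u_0$ so that $e_u(\cdot,0)=P_h u_0-u_0=-\delta_u(\cdot,0)$; by the projection estimate \eqref{projrel} applied at $t=0$ this contributes $\|e_u(\cdot,0)\|_{L^2(a,b)}\le C\,h^{k+1}$. Combining the two pieces and squaring yields
\begin{equation*}
\|e_u(\cdot,t)\|_{L^2(a,b)}^2 \le C(\delta)\,h^{2k+2},
\end{equation*}
which is precisely \eqref{eq:l2eu}, with the constant absorbing the factor of $T$ and inheriting its dependence on $|u|_{L^\infty([0,T];H^{k+1}(a,b))}$ through $\|\delta_u(\cdot,0)\|_{L^2}$, and on $|u_t|_{L^\infty([0,T];H^{k+1}(a,b))}$ and $|q|_{L^\infty([0,T];H^{k+1}(a,b))}$ through Theorem \ref{theorem_ut}.

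I do not expect any serious technical obstacle here: the real work has already been done in Theorem \ref{theorem_ut}, and the present result is a direct consequence via the fundamental theorem of calculus in time. The only minor subtlety worth flagging is the choice of discrete initial data, which must be matched to the $L^2$-projection used in the definition of $\tilde e_u$ so that $\|e_u(\cdot,0)\|_{L^2(a,b)}$ is of the same optimal order $h^{k+1}$; any other reasonable projection giving the same optimal approximation order would work equally well.
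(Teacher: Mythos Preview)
Your argument is correct: once Theorem~\ref{theorem_ut} is in hand, a direct time integration of $(e_u)_t$ together with an optimal initialization $u_h(\cdot,0)=P_h u_0$ immediately yields the claimed $L^2$ bound on $e_u$, and nothing more is needed.

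The paper, however, takes a genuinely different route. It employs the classical time-integrated test-function technique (often attributed to Baker and Dupont for second-order hyperbolic problems): one introduces the backward-in-time primitives $\mathcal{I}_{\tilde e_u}(x,t)=\int_t^\tau \tilde e_u(x,r)\,dr$ and $\mathcal{I}_{\tilde e_q}$, tests the first error equation with $\mathcal{I}_{\tilde e_u}$ and a time-integrated version of the second equation with $\tilde e_q$, and after invoking Lemma~\ref{lemma1} obtains an identity of the form
\[
\frac{d}{dt}\Big(\tfrac12\|\tilde e_u\|_{L^2}^2 - \int_0^\delta s^2\gamma(s)\|\mathcal{I}_{\tilde e_q}\|_{L^2}^2\,ds\Big)
= -\frac{d}{dt}\int_a^b (\tilde e_u)_t\,\mathcal{I}_{\tilde e_u}\,dx + \text{(projection-error terms)}.
\]
Integrating in $t$ over $(0,\tau)$, bounding the $\mathcal{K}$ and $\mathcal{H}$ remainders as in \eqref{eq:error_aux4}--\eqref{eq:error_aux5}, and using the crude estimate $\|\mathcal{I}_{\tilde e_u}\|_{L^2}^2\le T^2\max_{[0,T]}\|\tilde e_u\|_{L^2}^2$ then closes a bound on $\max_{[0,T]}\|\tilde e_u\|_{L^2}^2$. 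Notably, the paper still calls on Theorem~\ref{theorem_ut} inside this argument to handle the boundary term $\int_a^b(\tilde e_u)_t(x,0)\,\mathcal{I}_{\tilde e_u}(x,0)\,dx$.

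Your approach is considerably more elementary and, in the present setting where the velocity estimate is already available as a standalone result, is all that is required. The paper's approach is the more general-purpose tool: it is the route one would have to take if a uniform-in-time bound on $\|(e_u)_t\|_{L^2}$ were \emph{not} separately established, since the time-integrated energy identity produces an estimate on $\|\tilde e_u\|_{L^2}$ directly from the error equations. Here that extra machinery is not strictly necessary, and your shortcut is perfectly valid; the paper likely presents the longer argument because it is the template reused verbatim in the asymptotic-compatibility proof (Theorem~\ref{theorem_asy}), where the structure matters more.
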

\begin{proof}

For any $t\leq T$, we define the time-integrated errors as follows
\begin{equation*}
\cI_{e_u}(x,t) := \int_t^\tau e_u(x,r) dr, \quad \cI_{\tilde e_u}(x,t) = \int_t^\tau \tilde e_u(x,r) dr, \quad \cI_{\delta_u}(x,t) := \int_t^\tau \delta_u(x,r) dr,
\end{equation*}
and
\begin{equation*}
\cI_{e_q}(x,t; s) := \!\!\int_t^\tau \!\!\!\!e_q(x,r;s) dr, \quad \cI_{\tilde e_q}(x,t;s) = \!\!\int_t^\tau \!\!\tilde e_q(x,r;s) dr, \quad \cI_{\delta_q}(x,t) := \!\int_t^\tau \!\delta_q(x,r;s) dr.
\end{equation*}

Then, taking the integral over $(t,\tau)$ with respect to the time variable for the second equation in \eqref{eq:error_aux1}, choosing $w_h = \tilde e_q$, multiplying the resulting equation by $2s^2\gamma(s)$, and integrating over $(0,\delta)$ in $s$ using the relation $e_q = \tilde e_q - \delta_q$, we get
\begin{equation}\label{eq:aux8}
\int_0^\delta \!\!\!2s^2\gamma(s)\!\!\int_{I_j}\!\! \big(\cI_{\tilde e_q}(x,t;s) \!-\! \cI_{\delta_q}(x,t;s)\big)  \tilde e_q(x,t;s) dx ds -2\! \int_0^\delta \!\!\!\!s^2\gamma(s)\mathcal{H}_j\big(\cI_{\tilde e_u} - \cI_{\delta_u}, \tilde e_q; s\big) ds = 0.
\end{equation}
Similarly, choosing $v_h = \cI_{\tilde e_u}(x,t)$ in the first equation of \eqref{eq:error_aux1}, and using the relation $e_u = \tilde e_u - \delta_u$ leads to
\begin{equation}\label{eq:aux9}
\int_{I_j} (\tilde e_u - \delta_u)_{tt} \cI_{\tilde e_u}dx - 2 \int_0^\delta s^2 \gamma(s) \mathcal{K}_j\big(\tilde e_q - \delta_q, \cI_{\tilde e_u}; s\big)ds = 0.
\end{equation}
Adding \eqref{eq:aux8} and \eqref{eq:aux9} together and summing the resulting equation over all subcells $I_j$ and using Lemma \ref{lemma1}, we obtain
\begin{equation}\label{eq:aux_10}
\begin{aligned}
\frac{d}{dt} \bigg( \frac{1}{2}\|\tilde e_u\|_{L^2(a,b)}^2  - \int_0^\delta s^2 &\gamma(s) \|\cI_{\tilde e_q}(x,t;s)\|_{L^2(a,b)}^2 ds\bigg) 
=  -\int_a^b \frac{d}{dt} \big((\tilde e_u)_{t} \cI_{\tilde e_u}\big) dx  \\
&+ 2 \int_0^\delta s^2 \gamma(s) \Big(-\mathcal{K}\big({\delta_q}, \cI_{\tilde e_u}; s\big) + \mathcal{H}\big(\cI_{\delta_u}, \tilde e_q; s\big)  \Big)ds,
\end{aligned}
\end{equation}
where we used the definition of the $L^2$-projection in \eqref{eq:l2_projection}. Integrating \eqref{eq:aux_10} over $(0,\tau)$ with respect to $t$, we then arrive at
\begin{equation}\label{eq:aux6}
\begin{aligned}
&\frac{1}{2} \|\tilde e_u(x,\tau)\|_{L^2(a,b)}^2 - \frac{1}{2} \|\tilde e_u(x,0)\|_{L^2(a,b)}^2 + \int_0^\delta s^2 \gamma(s) \|\cI_{\tilde e_q}(x,0;s)\|_{L^2(a,b)}^2ds\\
= &\!\! \int_a^b\!\!\! \big(\tilde e_u(x,0)\big)_t \cI_{\tilde e_u}(x,0)dx + 2\!\int_0^\tau \!\!\!\int_0^\delta\!\! s^2 \gamma(s) \Big(\!-\mathcal{K}\big( {\delta_q}, \cI_{\tilde e_u}; s\big) + \mathcal{H}\big(\cI_{\delta_u}, {\tilde e_q}; s\big) \! \Big)ds dt.
\end{aligned}
\end{equation}
Let $(u_h(x,0), q_h(x, 0;s)) = (P_hu(x,0), P_hq(x,0;s))$ and invoke the bound on $\|(e_u)_t\|_{L^2(a,b)}^2$ from Theorem \ref{theorem_ut}, and use H\"older's inequality, we derive from \eqref{eq:aux6} that
\begin{equation}\label{eq:aux11}
\begin{aligned}
&\frac{1}{2} \|\tilde e_u(x,\tau)\|_{L^2(a,b)}^2 + \int_0^\delta s^2 \gamma(s) \|\cI_{\tilde e_q}(x,0;s)\|_{L^2(a,b)}^2ds\\
\leq & \ C(\delta)h^{2k+2} \!\!+\! \frac{1}{8T^2}\|\cI_{\tilde e_u}(x,0)\|_{L^2(a,b)}^2 \!+ 2\!\int_0^\tau \!\!\!\int_0^\delta\!\! \!\!s^2 \gamma(s) \Big(\!\!-\!\mathcal{K}\big( {\delta_q}, \cI_{\tilde e_u}; s\big) \!+\! \mathcal{H}\big(\cI_{\delta_u}, {\tilde e_q}; s\big) \! \Big)ds dt.
\end{aligned}
\end{equation}
Now, following similar steps as in the derivation of \eqref{eq:error_aux4} and \eqref{eq:error_aux5}, and using the properties of the $L^2$-projection in \eqref{projrel}, the property of the kernel in \eqref{eq:kernel}, and H\"older's inequality, we generate
\begin{equation}\label{eq:aux_12}
\begin{aligned}
&2\int_0^\tau\int_0^\delta s^2 \gamma(s) \Big(\!-\mathcal{K}\big({\delta_q}, \cI_{\tilde e_u}; s\big) + \mathcal{H}\big(\cI_{\delta_u}, {\tilde e_q}; s\big) \Big)ds dt \\
= & \ 2\!\!\int_0^\tau\!\!\!\int_0^\delta \!\!\!-\mathcal{K}\Big(\!s\sqrt{\gamma(s)}{\delta_q}, s\sqrt{\gamma(s)}\cI_{\tilde e_u}; s\Big) \!+\! \mathcal{H}\Big(\!s\sqrt{\gamma(s)}\cI_{\delta_u}, s\sqrt{\gamma(s)}{\tilde e_q}; s\Big)ds dt \\
\leq & \ \frac{1}{8T^2}  \max_{t\in[0,T]}\|\cI_{\tilde e_u}\|_{L^2(a,b)}^2 + C(\delta)h^{2k+2}. 
\end{aligned}
\end{equation}
Next, we proceed to bound $\|\cI_{\tilde e_u}\|_{L^2(a,b)}^2$ with $\|\tilde e_u\|_{L^2(a,b)}^2$. To this end, for any $t<\tau\leq T$, using Cauchy-Schwarz inequality, we have
\begin{equation}\label{eq:aux7}
\|\cI_{\tilde e_u}\|_{L^2(a,b)}^2 = \int_a^b \Big(\int_t^\tau \tilde e_u(x,r) dr \Big)^2 dx \leq  (\tau-t)\int_t^\tau \|\tilde e_u\|_{L^2(a,b)}^2 dr \leq T^2\max_{t\in[0,T]}\|\tilde e_u\|_{L^2(a,b)}^2.
\end{equation}
Plugging \eqref{eq:aux_12}--\eqref{eq:aux7} into \eqref{eq:aux11} then yields
\begin{equation}\label{eq:aux13}
\max_{t\in[0,T]}\|\tilde e_u\|_{L^2(a,b)}^2 \leq C(\delta)h^{2k+2}.
\end{equation}
Last, using the triangle inequality, the projection property \eqref{projrel}, and the fact that $\tilde e_u = e_u + \delta_u$, we arrive at the desired result \eqref{eq:l2eu}.

\end{proof}

\begin{remark}
We note that the dependence of the convergence constant $C(\delta)$ on both the horizon $\delta$ and the kernel $\gamma(s)$ in Theorem \ref{theorem_ut} and Theorem \ref{theorem_u} follows from the calculation of $\int_0^\delta \gamma(s)ds$, which is obtained using H\"older's inequality when estimating the $\cK$ term in \eqref{eq:error_aux4} and \eqref{eq:aux_12}, and the $\cH$ term in \eqref{eq:error_aux5} and \eqref{eq:aux_12}. 
\end{remark}

In the following, we further demonstrate that the proposed DG scheme \eqref{EQ:DG_scheme} is asymptotically compatible (see Definition \ref{def:asym}) by establishing a suboptimal error estimate whose convergence constant is independent of the horizon $\delta$, the kernel function $\gamma(s)$, and the spatial mesh size $h$.

\subsection{Asymptotic compatibility with general kernel}\label{sec:asy}

We first define the concept of asymptotic compatibility for the DG scheme \eqref{EQ:DG_scheme} in the context of the nonlocal wave problem \eqref{EQ:NW}, consistent with the definition given in \cite{TiDu-SIAM14}. 
\begin{definition}\label{def:asym}
Let $u(x,t;\delta)$ be the solution to the nonlocal wave equation \eqref{EQ:NW}, $u_{\text{loc}}(x,t)$ be the solution to the local problem \eqref{EQ:wave}, and $u_h(x,t;\delta)$ be the DG approximation to the nonlocal wave problem \eqref{EQ:NW}. The DG scheme \eqref{EQ:DG_scheme} is then said to be asymptotically compatible if 
\begin{equation*}
\lim_{\delta,h \rightarrow 0}\|u_h(x,t;\delta) - u_{\text{loc}}(x,t)\|_{L^2(a,b)} = 0\quad \forall t > 0.
\end{equation*}
\end{definition}
 
 Note that on the continuum level, the nonlocal wave problem \eqref{EQ:NW} asymptotically converges to the local wave equation \eqref{EQ:wave} as $\delta \rightarrow 0$, i.e.,
\begin{equation*}
\lim_{\delta \rightarrow 0} \|u(x,t;\delta) - u_{\text{loc}}(x,t)\|_{L^2(a,b)} = 0\quad \forall t > 0.
\end{equation*}
Using Definition \ref{def:asym} and the triangle inequality, we can deduce that 
\begin{equation}
\lim_{\delta,h \rightarrow 0}\|u(x,t;\delta) - u_h(x,t;\delta)\|_{L^2(a,b)} = 0 \quad \forall t > 0
\end{equation}
is sufficient to guarantee the asymptotic compatibility of the DG scheme \eqref{EQ:DG_scheme}. In this paper, following a similar procedure in \cite{DuJuLu-MC19}, we can derive the following error estimate to ensure the asymptotic compatibility of the proposed DG scheme \eqref{EQ:DG_scheme}.

\begin{theorem}\label{theorem_asy}
    Suppose that the kernel $\gamma(s)$ satisfies the condition \eqref{eq:kernel}. Let $(u_h, q_h) \in V_h^k\times V_h^k$ be the DG approximation of the exact solution $(u,q)$ of the reformulated problem \eqref{eq:NW3}. If the exact solution $(u,q)$ is sufficiently smooth, the following suboptimal error estimates hold for any $t\in[0,T]$
\begin{equation}\label{eq:asy_theorem}
    \begin{aligned}
        &\|e_u\|_{L^2(a,b)}^2  \leq Ch, \quad\quad  \int_0^\delta s^2 \gamma(s) \|e_q(x,t;s)\|_{L^2(a,b)}^2  ds \le Ch, \quad \text{ when } k = 0 \text{ and } h_j = h,\\
        & \|e_u\|_{L^2(a,b)}^2  \leq Ch^{2k} , \quad \int_0^\delta s^2 \gamma(s) \|e_q(x,t;s)\|_{L^2(a,b)}^2  ds \le Ch^{2k}, \quad \text{when } k \geq 1,
    \end{aligned}
    \end{equation}
where $C>0$ is a constant independent of the mesh size $h$, the kernel $\gamma(s)$, and the horizon $\delta$, but depends on the norms $|u_t(x,t)|_{L^\infty([0,T]; H^{k+1}(a,b))}$, $|u(x,t)|_{L^\infty([0,T];H^{k+1}(a,b))}$ and $|q(x,t; s)|_{L^\infty([0,T];H^{k+1}(a,b))}$.
\end{theorem}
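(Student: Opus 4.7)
The plan is to mimic the arguments in the proofs of Theorems \ref{theorem_ut} and \ref{theorem_u}, with each use of H\"older's inequality replaced by a sharper estimate whose final $s$-integration involves only the bounded quantity $C_\delta = \int_0^\delta s^2\gamma(s)\,ds$, rather than $\int_0^\delta \gamma(s)\,ds$ which can blow up for singular kernels. I would first follow the setup of Theorem \ref{theorem_ut} to derive the energy identity \eqref{eq:error_aux3}. The entire task then reduces to bounding the projection-error terms $\cK(\delta_q, (\tilde e_u)_t; s)$ and $\cH((\delta_u)_t, \tilde e_q; s)$ on the right-hand side uniformly in $\delta$ and $\gamma$.

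The central new tool is the $\delta$-uniform inequality
\[
|\cH(v,\phi;s)| + |\cK(\phi,v;s)| \le C h^k \|v\|_{L^2(a,b)} \qquad \forall\, v \in V_h^k,\ s > 0,
\]
valid whenever $\|\phi\|_{L^\infty(a,b)} \le C h^{k+1}$, which holds for $\phi = \delta_q(\cdot,t;s)$ and $\phi = (\delta_u)_t$ by \eqref{projrel}. To prove it I would decompose $v(x+s)-v(x) = \int_x^{x+s} v'(y)\,dy + \sum_{j:\,x_{j+1/2}\in(x,x+s)} [[v]]_{j+1/2}$ elementwise; for the smooth part, Fubini and the inverse inequality \eqref{eq:inverse} produce a factor $Ch^{-1}\|v\|_{L^2}$ which is compensated by $\|\phi\|_{L^\infty}\le Ch^{k+1}$ to give $Ch^k\|v\|_{L^2}$, and for the jump part the trace-type bound $\sum_j|[[v]]_{j+1/2}|\le Ch^{-1}\|v\|_{L^2}$ yields the same rate. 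Crucially the bound is uniform in $s$, so multiplying by $s^2\gamma(s)$, integrating in $s$, and applying Cauchy-Schwarz in $s$ together with Young's inequality bound each projection-error term by $Ch^{2k} + \epsilon E_h(t)$, where $E_h(t) = \tfrac{1}{2}\|(\tilde e_u)_t\|^2 + \int_0^\delta s^2\gamma(s)\|\tilde e_q\|^2\,ds$. Gronwall then yields $E_h(t) \le Ch^{2k}$, and the triangle inequality combined with \eqref{projrel} produces the claimed bounds on $\|(e_u)_t\|^2$ and $\int_0^\delta s^2\gamma\|e_q\|^2\,ds$ for $k \ge 1$. The bound on $\|e_u\|^2$ follows by adapting the time-integration argument of Theorem \ref{theorem_u}, applying the same uniform inequality to $\cI_{\delta_u}$ and $\cI_{\delta_q}$.

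The $k=0$ case on a uniform mesh requires separate handling because $v'|_{I_j}\equiv 0$ for $v \in V_h^0$, so the smooth-part contribution vanishes and the above reasoning degenerates to $|\cH(v,\phi;s)|\le C\|v\|_{L^2}$, a non-decaying estimate. To recover the rate $h$, one exploits the elementwise zero-mean property of the $L^2$-projection error $\delta_q|_{I_j}$ together with a Taylor expansion of $q$ about $x_{j+1/2}$ to extract an extra factor $(h-s)$ from $\int_{x_{j+1/2}-s}^{x_{j+1/2}}\delta_q(x)\,dx$ when $s \le h$, and then uses the translation invariance of the uniform mesh with a discrete summation by parts to transfer one difference onto the smooth coefficient $q'$, gaining an additional factor of $h$. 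Separating the $s$-integral into the regions $s \le h$ and $s > h$ and combining with these refinements yields the stated rate $h$ in the energy.

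The main obstacle is to orchestrate the smooth/jump decomposition and the Cauchy-Schwarz inequalities so that no factor of $\gamma(s)$ without the compensating $s^2$ ever appears in the $s$-integration, which amounts to exactly balancing the $h^{-1}$ cost of the inverse and trace inequalities against the $h^{k+1}$ smallness of $\|\phi\|_{L^\infty}$. For $k=0$ the smooth-part contribution is absent and this balance fails by one power of $h$, forcing the additional use of cell-level cancellations and uniform-mesh shift invariance, which is why the hypothesis $h_j = h$ is imposed in that case.
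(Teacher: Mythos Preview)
For the case $k\ge 1$ your plan is correct and amounts to a repackaging of the paper's argument. The paper bounds $\cK(\delta_q,(\tilde e_u)_t;s)$ by splitting first according to $\delta<h_j$ versus $\delta\ge h_j$ and then, for $s<h_j$, partitioning $I_j$ into a boundary strip of width $s$ (where shifting crosses an interface) and an interior part (where it does not); on the interior the difference is moved onto the discrete function and controlled by $\|(\tilde e_u)_{tx}\|_{L^\infty}$ via the inverse inequality, while on the boundary strip a direct $L^\infty$ bound suffices. Your ``smooth part plus jumps'' decomposition of $v(x+s)-v(x)$, applied after using Lemma~\ref{lemma1} to put the difference on the $V_h^k$ argument, is exactly the same mechanism expressed more abstractly: your ``smooth part'' is the paper's interior estimate and your ``jump part'' is its boundary-strip estimate. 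Either way one obtains the $s$-uniform bound $Ch^{k}\|v\|_{L^2}$, so that only $\int_0^\delta s^2\gamma(s)\,ds=C_\delta$ enters the final inequality and Gr\"onwall closes as you describe.

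For $k=0$ the paper follows a genuinely different route: it abandons the $L^2$-projection in favour of the Gauss--Radau projections $P_h^+u$, $P_h^-q$, accepts the extra (non-orthogonal) terms this generates in the error equation, and for the crucial $\cK$/$\cH$ estimate invokes the analysis of \cite{du2019convergence} rather than arguing directly. Your alternative---keeping $P_h$ and exploiting its zero-mean property together with a discrete summation by parts on the uniform mesh---is a legitimate idea, but the description is imprecise. The claim that zero-mean ``extracts an extra factor $(h-s)$'' from $\int_{x_{j+1/2}-s}^{x_{j+1/2}}\delta_q$ does not by itself improve the rate: combined with the trivial bound one gets $\min(s,h-s)\|\delta_q\|_{L^\infty}$, which after the $1/s$ and $\sum_j|[[v]]|$ still yields only $O(1)$. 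The actual gain comes from first summing by parts in $j$ and then using translation invariance of the uniform mesh to show that the differenced boundary integrals satisfy $|B_j-B_{j-1}|\le C s h^2\|q_{xx}\|_{L^\infty}$ (one power of $h$ from $q(y)-q(y-h)$, a second from subtracting its cell average). For $s>h$ you must also observe that writing $s=mh+r$ and using zero-mean on each cell telescopes the multi-jump differences $v_{j+m}-v_j$ down to single jumps $v_{j+m+1}-v_{j+m}$ before summing by parts. With these details supplied your route does close, and in fact appears to yield a slightly better power of $h$ than the paper states; but as written the sketch omits the steps that make it work.
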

\begin{proof}
The derivation follows a similar approach as presented in \cite{DuJuLu-MC19}. In our case, however, we must first obtain an estimate for $\|(e_u)_t\|_{L^2(a,b)}$ (\underline{\emph{step one}}), which is then used to derive an estimate for $\|e_u\|_{L^2(a,b)}$ (\underline{\emph{step two}}). Moreover, since we are only able to derive a suboptimal convergence when $k \geq 1$, we divide our discussion into two cases with $k\geq 1$ and $k=0$. In the following, we first consider the case where $k\geq 1$, and then with $k = 0$. In addition, for each subcase, we present two-step process to estimate $\|e_u\|_{L^2(a,b)}$.

 \textbf{Case I:} $k\geq 1$. 

\underline{\emph{Step one.}}  To derive an estimate for $\|e_u\|_{L^2(a,b)}$, we first derive an estimate for $\|(\tilde e_u)_t\|_{L^2(a,b)}$. To this end, following the error equation \eqref{eq:error_aux3}, we need to establish a bound for 
\begin{equation}\label{eq:asy_eq0}
-2\int_0^\delta s^2 \gamma(s) \Big(\mathcal{K}\big(\delta_q, (\tilde e_u)_t; s\big) + \mathcal{H}\big((\delta_u)_t, \tilde e_q; s\big)  \Big)ds.
\end{equation}
To achieve this, we consider two cases with $\delta < h_j$ and $\delta \geq h_j$. In particular, when $\delta < h_j$, invoking the definition of the operator $\cK$ in \eqref{eq:def_KH}, for the first term in \eqref{eq:asy_eq0}, we have 
\begin{equation}\label{eq:asy_aux2}
\begin{aligned}
& -2\int_0^\delta s^2 \gamma(s) \cK\big(\delta_q, (\tilde e_u)_t;s\big) ds \\
= & -2\sum_j\int_0^\delta s^2 \gamma(s) \int_{I_j} \frac{1}{s} \big(\delta_q(x+s,t) - \delta_q(x,t)\big) \big(\tilde e_u(x,t)\big)_t dx ds \\
= &\ 2\sum_j \!\!\int_0^\delta \!\!\! s^2\gamma(s) \!\bigg(\!\int_{x_{j-\frac{1}{2}}}^{x_{j-\frac{1}{2}}+s}\!\frac{1}{s} \delta_q(x,t)\big(\tilde e_u(x,t)\big)_t dx - \!\!\!\int_{x_{j+\frac{1}{2}}}^{x_{j+\frac{1}{2}}+s} \!\frac{1}{s} \delta_q(x,t)\big(\tilde e_u(x-s,t)\big)_t dx \!\!\bigg) ds \\
 & -  2\sum_j\int_0^\delta s^2\gamma(s)\int_{x_{j-\frac{1}{2}}+s}^{x_{j+\frac{1}{2}}} \frac{1}{s} \Big(-\big(\tilde e_u(x,t)\big)_t + \big(\tilde e_u(x-s,t)\big)_t\Big)\delta_q(x,t) dxds \\
\leq & \ 2\sum_j \int_0^\delta s^2\gamma(s) \Big(2\|\delta_q\|_{L^\infty(a,b)} \|(\tilde e_u)_t\|_{L^\infty(a,b)} + (h_j-s) \|\delta_q\|_{L^\infty(a,b)}\|(\tilde e_u)_{tx}\|_{L^\infty(a,b)}\Big)ds\\
\leq &\ Ch^{k} \|(\tilde e_u)_t\|_{L^2(a,b)} \sum_j\int_0^\delta s^2\gamma(s) ds,
\end{aligned}
\end{equation}
where the last inequality is derived based on the property of the $L^2$-projection \eqref{projrel}, and the inverse inequality \eqref{eq:inverse}. Furthermore, applying H\"older's inequality, \eqref{eq:asy_aux2} yields
\begin{equation}\label{eq:asy_1aux2}
-2\int_0^\delta s^2\gamma(s) \cK(\delta_q, (\tilde e_u)_t; s)ds \leq Ch^{2k} + \frac{1}{2}\|(\tilde e_u)_t\|_{L^2(a,b)}^2. 
\end{equation}

 When $\delta \geq h_j$, we get  
\begin{equation}\label{eq:asy_aux1}
-2\int_0^\delta s^2 \gamma(s) \cK\big(\delta_q, (\tilde e_u)_t;s\big) ds 
=  -2\sum_j \Big( \int_0^{h_j} + \int_{h_j}^\delta \Big) s^2 \gamma(s) \cK_j\big(\delta_q, (\tilde e_u)_t;s\big) ds.
\end{equation}
Then, similar to the derivation of \eqref{eq:asy_aux2}, for the first term on the right hand side of \eqref{eq:asy_aux1} we can derive that
\begin{equation}\label{eq:asy_aux2_1}
\begin{aligned}
 & -2\sum_j\int_0^{h_j} s^2 \gamma(s) \cK_j\big(\delta_q,(\tilde e_u)_t;s\big) ds 
 \leq  \ Ch^{k} \|(\tilde e_u)_t\|_{L^2(a,b)} \sum_j\int_0^{h_j} s^2\gamma(s) ds.
\end{aligned}
\end{equation}
As for the second term on the right hand side of \eqref{eq:asy_aux1}, invoking the definition of $\cK_j$ in \eqref{eq:hk}, we collect that
\begin{equation}\label{eq:asy_aux3}
\begin{aligned}
&  -2\sum_j\int_{h_j}^\delta s^2 \gamma(s) \int_{I_j} \frac{1}{s} \big(\delta_q(x+s,t) - \delta_q(x,t)\big) \big(\tilde e_u(x,t)\big)_t dx ds \\
\leq & \ \sum_j \int_{h_j}^\delta 4s^2\gamma(s) \|\delta_q\|_{L^\infty(a,b)} \|(\tilde e_u)_t\|_{L^\infty(a,b)} ds\\
\leq&\  Ch^{k} \|(\tilde e_u)_t\|_{L^2(a,b)} \sum_j \int_{h_j}^\delta s^2\gamma(s) ds.
\end{aligned}
\end{equation}
Plugging \eqref{eq:asy_aux2_1}--\eqref{eq:asy_aux3} into \eqref{eq:asy_aux1}, and using H\"older inequality yields
\begin{equation}\label{eq:asy_aux4}
-2\int_0^\delta s^2 \gamma(s) \cK\big(\delta_q, (\tilde e_u)_t;s\big) ds \leq Ch^{2k} +  \frac{1}{2}\|(\tilde e_u)_t\|_{L^2(a,b)}^2 .
\end{equation}
Now, combining \eqref{eq:asy_1aux2} for the case of $\delta < h_j$ and \eqref{eq:asy_aux4} for the case of $\delta \geq h_j$, we arrive at
\begin{equation}\label{eq:asy_aux4_1}
\int_0^\delta s^2 \gamma(s) \cK\big(\delta_q, (\tilde e_u)_t;s\big) ds \leq Ch^{2k} +  \frac{1}{2}\|(\tilde e_u)_t\|_{L^2(a,b)}^2 \quad \forall \delta > 0.
\end{equation}

Similarly, for the second term in \eqref{eq:asy_eq0}, one can derive that
\begin{equation}\label{eq:asy_aux5}
\int_0^\delta s^2 \gamma(s) \cH\big((\delta_u)_t, \tilde e_q;s\big) ds \leq Ch^{2k} +\int_0^\delta s^2\gamma(s) \|\tilde e_q(x,t;s)\|_{L^2(a,b)}^2ds \quad \forall \delta >0.
\end{equation}

Plugging \eqref{eq:asy_aux4_1}--\eqref{eq:asy_aux5} into the error equation \eqref{eq:error_aux3}, and using Gr\"onwall's inequality yields
\begin{equation}\label{eq:asy_aux8}
\frac{1}{2}\|(\tilde e_u)_t\|_{L^2(a,b)}^2 + \int_0^\delta s^2 \gamma(s) \|\tilde e_q(x,t;s)\|_{L^2(a,b)}^2  ds \leq   Ch^{2k}.
\end{equation}

\underline{\emph{Step two.}} We next proceed to estimate $\|\tilde e_u\|_{L^2(a,b)}$. To this end, we consider the error equation \eqref{eq:aux6}, and use the $L^2$-initial projection, the H\"older's inequality and the estimate \eqref{eq:asy_aux8} to find that
\begin{equation}\label{eq:asy_aux7}
\begin{aligned}
\frac{1}{2} \|\tilde e_u(x,\tau)\|_{L^2(a,b)}^2  &\leq \ Ch^{2k} + \frac{1}{8T^2}\|\cI_{\tilde e_u}(x,0)\|_{L^2(a,b)}^2 \\
&+ 2\!\int_0^\tau \!\!\!\int_0^\delta\!\! s^2 \gamma(s) \Big(\!-\mathcal{K}\big(\! {\delta_q}, \cI_{\tilde e_u}; s\big) + \mathcal{H}\big(\cI_{\delta_u}, {\tilde e_q}; s\big) \! \Big)ds dt.
\end{aligned}
\end{equation}
Moreover, following the similar analysis as the derivation of \eqref{eq:asy_aux4_1}-\eqref{eq:asy_aux5} in \emph{step one}, we obtain
\begin{equation}\label{eq:asy_aux10}
\begin{aligned}
& 2\!\int_0^\tau \!\!\!\int_0^\delta\!\! s^2 \gamma(s) \Big(\!-\mathcal{K}\big(\! {\delta_q}, \cI_{\tilde e_u}; s\big) + \mathcal{H}\big(\cI_{\delta_u}, {\tilde e_q}; s\big) \! \Big)ds dt \\
\leq & \ Ch^{2k} + \frac{1}{8T^3}\int_0^\tau \|\cI_{\tilde e_u}\|_{L^2(a,b)}^2 dt + \int_0^\tau\int_0^\delta s^2\gamma(s) \|\tilde e_q(x,t;s)\|_{L^2(a,b)}^2ds dt\\
\leq &\ Ch^{2k} + \frac{1}{8} \max_{t\in[0,T]} \|\tilde e_u(x,t)\|_{L^2(a,b)}^2,
\end{aligned}
\end{equation}
where the last inequality is obtained by using the relation \eqref{eq:aux7}, and the estimate \eqref{eq:asy_aux8}.

 Finally, plugging \eqref{eq:asy_aux10} into 
\eqref{eq:asy_aux7}, and invoking \eqref{eq:aux7}, we conclude that
\begin{equation}\label{eq:asy_aux14}
\max_{t\in[0,T]}\|\tilde e_u\|_{L^2(a,b)}^2 \leq Ch^{2k}, \quad k \geq 1.
\end{equation}

 \textbf{Case II:} $k=0$. 

In the following, we discuss the case where $k = 0$. Here, instead of $L^2$-projection defined in \eqref{eq:l2_projection}, we consider Gauss-Radau projection defined in \eqref{eq:gr_projection}. Denote the errors by
\[\begin{aligned}
\tilde e_u = e_u + \delta_u &:= \big(u_h - u\big) + \big(u - P_h^+ u\big),\\
\tilde e_q = e_q + \delta_q &:= \big(q_h - q\big) + \big(q - P_h^- q\big),
\end{aligned}\]
we then perform a two-step procedure to derive an estimate on $\|e_u\|_{L^2(a,b)}$.

\underline{\emph{Step one.}} Similar to Case I, to derive an estimate for $\|e_u\|_{L^2(a,b)}$, we first derive an estimate for $\|(\tilde e_u)_t\|_{L^2(a,b)}$. We note that the error equation, following the same derivation of \eqref{eq:error_aux3}, satisfies
\begin{equation}\label{eq:asy_aux18}
\begin{aligned}
& \frac{d}{dt}\bigg( \frac{1}{2}\|(\tilde e_u)_t\|_{L^2(a,b)}^2 + \int_0^\delta s^2 \gamma(s) \|\tilde e_q(x,t;s)\|_{L^2(a,b)}^2  ds\bigg) \\
= \ & -  2 \int_0^\delta s^2 \gamma(s) \Big(\mathcal{K}\big(\delta_q, (\tilde e_u)_t; s\big) + \mathcal{H}\big((\delta_u)_t, \tilde e_q; s\big)  \Big)ds\\
& - \int_a^b (\delta_u)_{tt} (\tilde e_u)_t dx - 2\int_0^\delta s^2\gamma(s) \int_a^b \delta_q(x,t;s)\tilde e_q(x,t;s)dxds, \\
\leq  \ & -  2 \int_0^\delta s^2 \gamma(s) \Big(\mathcal{K}\big(\delta_q, (\tilde e_u)_t; s\big) + \mathcal{H}\big((\delta_u)_t, \tilde e_q; s\big)  \Big)ds \\
& +  Ch^2 + \frac{1}{2}\|(\tilde e_u)_t\|_{L^2(a,b)}^2 + \int_0^\delta s^2\gamma(s) \|\tilde e_q(x,t;s)\|_{L^2(a,b)}^2ds,
\end{aligned}
\end{equation}
where the last inequality is derived by using H\"older's inequality and the property of the projection operator in \eqref{projre_gr}. To bound the term including $\cK$ and $\cH$ in \eqref{eq:asy_aux18}, we follow the same analysis conducted in \cite{du2019convergence} when $h_j = h$ (see their equations (4.29)-(4.33), we omit the derivation for simplicity) to obtain
\begin{equation}\label{eq:asy_aux20}
\begin{aligned}
& -2 \int_0^\delta s^2 \gamma(s) \Big(\mathcal{K}\big(\delta_q, (\tilde e_u)_t; s\big) + \mathcal{H}\big((\delta_u)_t, \tilde e_q; s\big)\Big) ds\\
\leq &\ Ch + \frac{1}{2}\|(\tilde e_u)_t\|_{L^2(a,b)}^2 + \int_0^\delta s^2 \gamma(s) \|\tilde e_q(x,t;s)\|_{L^2(a,b)}^2 ds.
\end{aligned}
\end{equation}
Then, plugging \eqref{eq:asy_aux20} into \eqref{eq:asy_aux18} and using Gr\"ownwall's inequality, we arrive at 
\begin{equation}\label{eq:asy_eut_bound}
\frac{1}{2}\|(\tilde e_u)_t\|_{L^2(a,b)}^2 + \int_0^\delta s^2 \gamma(s) \|\tilde e_q(x,t;s)\|_{L^2(a,b)}^2  ds \leq Ch.
\end{equation}

\underline{\emph{Step two.}} We next proceed to the estimation of $\|\tilde{e}_u\|_{L^2(a,b)}^2$. Similar to Case I, following the same derivation of \eqref{eq:aux6}, we find that the error equation  reads as
\begin{equation}\label{eq:asy_aux18_1}
\begin{aligned}
\frac{d}{dt} \bigg( \frac{1}{2}\|\tilde e_u\|_{L^2(a,b)}^2 & - \int_0^\delta s^2 \gamma(s) \|\cI_{\tilde e_q}(x,t;s)\|_{L^2(a,b)}^2 ds\bigg) 
=  -\int_a^b \frac{d}{dt} \big((\tilde e_u)_{t} \cI_{\tilde e_u}\big) dx  \\
&+ 2 \int_0^\delta s^2 \gamma(s) \Big(-\mathcal{K}\big({\delta_q}, \cI_{\tilde e_u}; s\big) + \mathcal{H}\big(\cI_{\delta_u}, \tilde e_q; s\big)  \Big)ds\\
&+\int_a^b (\delta_u)_{tt} \cI_{\tilde e_u} dx + 2\int_0^\delta s^2\gamma(s) \int_a^b \cI_{\delta_q}(x,t;s)\tilde e_q(x,t;s)dxds.
\end{aligned}
\end{equation}
Integrating \eqref{eq:asy_aux18_1} over $(0,\tau)$ with respect to time variable, we then collect
\begin{equation}\label{eq:asy_aux19}
\begin{aligned}
&\frac{1}{2} \|\tilde e_u(x,\tau)\|_{L^2(a,b)}^2 - \frac{1}{2} \|\tilde e_u(x,0)\|_{L^2(a,b)}^2 + \int_0^\delta s^2 \gamma(s) \|\cI_{\tilde e_q}(x,0;s)\|_{L^2(a,b)}^2ds\\
=\  &\!\! \int_a^b\!\!\! \big(\tilde e_u(x,0)\big)_t \cI_{\tilde e_u}(x,0)dx + 2\!\int_0^\tau \!\!\!\int_0^\delta\!\! s^2 \gamma(s) \Big(\!-\mathcal{K}\big({\delta_q}, \cI_{\tilde e_u}; s\big) + \mathcal{H}\big(\cI_{\delta_u}, {\tilde e_q}; s\big) \! \Big)ds dt \\
+ & \int_0^\tau \int_a^b (\delta_u)_{tt} \cI_{\tilde e_u} dxdt + 2\int_0^\tau\int_0^\delta s^2\gamma(s) \int_a^b \cI_{\delta_q}(x,t;s)\tilde e_q(x,t;s)dxds dt \\
\leq \ & 2\!\int_0^\tau \!\!\!\int_0^\delta\!\! s^2 \gamma(s) \Big(\!-\mathcal{K}\big({\delta_q}, \cI_{\tilde e_u}; s\big) + \mathcal{H}\big(\cI_{\delta_u}, {\tilde e_q}; s\big) \! \Big)ds dt + Ch + \frac{1}{8}\max_{t\in[0,T]} \|\tilde e_u\|_{L^2(a,b)}^2,
\end{aligned}
\end{equation}
where the last inequality is derived by using Gauss-Radau initial projection with $u_h(x,0) = P_h^+u(x,0)$ and $q_h(x,0; s) = P_h^-q(x,0; s)$, the estimate \eqref{eq:asy_eut_bound}, the property of the Gauss-Radau projection in \eqref{projre_gr}, the H\"older's inequality, and the relation between error and time-integrated error shown in \eqref{eq:aux7}. To bound the term involving $\cK$ and $\cH$ in \eqref{eq:asy_aux19}, we follow the same process as the derivation of \eqref{eq:asy_aux20} to find that
\begin{equation}\label{eq:asy_aux_19_1}
\begin{aligned}
&2 \int_0^\tau\int_0^\delta s^2 \gamma(s) \Big(-\mathcal{K}\big({\delta_q}, \cI_{\tilde e_u}; s\big) + \mathcal{H}\big(\cI_{\delta_u}, \tilde e_q; s\big)  \Big)dsdt \\
\leq \ &Ch + \frac{1}{8T^3}\int_0^\tau\|\cI_{\tilde e_u}\|_{L^2(a,b)}^2 dt + \int_0^\tau \int_0^\delta s^2 \gamma(s)\|\tilde e_q(x,t;s)\|_{L^2(a,b)}^2 dsdt \\
\leq \ &Ch + \frac{1}{8} \max_{t\in[0,T]}\|\tilde e_u\|_{L^2(a,b)}^2,
\end{aligned}
\end{equation}
where the relations \eqref{eq:aux7} and \eqref{eq:asy_eut_bound} have been used to derive the last inequality. Plugging \eqref{eq:asy_aux_19_1} into \eqref{eq:asy_aux19} then yields
\begin{equation}\label{eq:asy_bound}
\max_{t\in[0,T]} \|\tilde e_u\|_{L^2(a,b)}^2 \leq Ch.
\end{equation}

 Finally, combine \eqref{eq:asy_aux8}, \eqref{eq:asy_aux14}, \eqref{eq:asy_eut_bound}, \eqref{eq:asy_bound} and invoke triangle inequality and the property of the Gauss-Radau projection, we obtain the desired estimate \eqref{eq:asy_theorem}.
\end{proof}

\begin{remark}
For the asymptotic compatibility of the DG scheme proposed in Theorem \ref{theorem_asy}, we can only show suboptimal convergence for a general kernel satisfying \eqref{eq:kernel}. However, numerical results consistently show optimal convergence, as shown in Section \ref{sec:convergence}. A thorough theoretical study of this phenomenon is left to future work.
\end{remark}

\section{Time discretization}
\label{sec:time}

In this section, we extend the semi-discrete DG scheme to the fully discrete method, which preserves the fully discrete energy.
	
\subsection{Crank--Nicolson time discretization}\label{sec_cn}

We study the Crank--Nicolson scheme for time discretization and show the energy conservation properties of the corresponding fully discrete scheme.  Let $0 = t_0 < t_1 < \cdots < t_n < \cdots < \cdots < t_{N_t} = T$ and denote $h_t:= t_{n+1} - t_n$.  Here we use the uniform time step $h_t$ and denote the DG solution at $t = t_n$ by $u_h^n$ and $q_h^n$. We also introduce the following three operators, which will be used in the rest of the content
\[\texttt{mean value operator}: {\mathcal{M}} u^n := \frac{u^{n+1} + u^{n-1}}{2},\]
and
\[\begin{aligned}
&\texttt{first-order central difference operator}: \mathcal{D} u^n := \frac{u^{n+1} - u^{n-1}}{2h_t}, \\
&\texttt{second-order central difference operator}: \mathcal{D}^2 u^n := \frac{u^{n+1} - 2u^n + u^{n-1}}{h_t^2}. 
\end{aligned}\]

With these notations, the fully discrete approximation $(u^n_h, q^n_h) = \big(u_h(\cdot, t_n), q_h(\cdot, t_n)\big)$ of problem \eqref{eq:NW3} then satisfies
\begin{equation}\label{eq:dis_DG}
\begin{aligned}
\int_{I_j} \mathcal{D}^2 u_h  v_h d{x} - 2 \int^\delta_0 s^2 \gamma(s) \cK(\mathcal{M} q_h, v_h; s) ds &= 0, \\
        \int_{I_j} q^{n+1}_h w_h d{x} - \cH (u^{n+1}_h, w_h; s) &= 0, \\
        \int_{I_j} q^{n-1}_h w_h d{x} - \cH (u^{n-1}_h, w_h; s) &= 0,
\end{aligned}
\end{equation}
for all test functions $v_h, w_h\in V_h^k$. Moreover, we have the following conservation property.
\begin{theorem}\label{dis_thm}
For all $n$, the solution to the fully discrete DG scheme \eqref{eq:dis_DG} conserves the discrete energy
\begin{equation}\label{discrete_energy}
E_h^{n+1} := \Big\vert\!\Big\vert \frac{u_h^{n+1} - u_h^n}{h_t} \Big\vert\!\Big\vert_{L^2(a,b)}^2 + \int_0^\delta \Vert q_h^{n+1} \Vert_{L^2(a,b)}^2 + \Vert q_h^{n} \Vert_{L^2(a,b)}^2 ds.
\end{equation}
\end{theorem}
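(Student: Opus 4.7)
The plan is to mirror the semi-discrete stability argument of Theorem \ref{Th: stable} at the fully discrete level, replacing $\partial_t$ by the discrete operators $\mathcal{D}$, $\mathcal{D}^2$ and instantaneous values by the mean-value operator $\mathcal{M}$. The Crank--Nicolson structure makes this pairing natural: since $\mathcal{M} q_h^n$ already appears on the right-hand side of the first equation of \eqref{eq:dis_DG}, Lemma \ref{lemma1} forces $\mathcal{D} u_h^n$ as its conjugate test function, exactly as $(u_h)_t$ and $q_h$ were paired in the continuous-time proof.

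The preparatory step is to verify two elementary algebraic identities, which are discrete counterparts of $(u_t^2)_t = 2 u_t u_{tt}$ and $(q^2)_t = 2 q q_t$:
$$\mathcal{D}^2 u_h^n \cdot \mathcal{D} u_h^n \;=\; \frac{1}{2 h_t}\!\left[\Big(\tfrac{u_h^{n+1}-u_h^n}{h_t}\Big)^2 - \Big(\tfrac{u_h^n-u_h^{n-1}}{h_t}\Big)^2\right], \qquad 2\, \mathcal{D} q_h^n \cdot \mathcal{M} q_h^n \;=\; \frac{1}{2 h_t}\bigl[(q_h^{n+1})^2 - (q_h^{n-1})^2\bigr].$$
Both follow from expanding $(a-b)(a+b)=a^2-b^2$ with appropriately chosen $a,b$, and they are precisely what converts products of discrete time operators into telescoping energy differences.

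Next I would perform two substitutions in \eqref{eq:dis_DG}. First, test the first equation with $v_h = \mathcal{D} u_h^n$ and sum over $j$; the first identity above turns the left-hand side into the telescoping kinetic increment, yielding an equation of the form (left-hand-side telescoping) $=2\int_0^\delta s^2 \gamma(s)\,\cK(\mathcal{M} q_h^n,\mathcal{D} u_h^n;s)\,ds$. Second, subtract the third equation in \eqref{eq:dis_DG} from the second and divide by $2 h_t$ to obtain $\int_{I_j} \mathcal{D} q_h^n\, w_h\,dx = \cH(\mathcal{D} u_h^n, w_h; s)$; then test with $w_h = \mathcal{M} q_h^n$, weight by $2 s^2 \gamma(s)$, integrate over $(0,\delta)$ and sum over $j$. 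Using the second algebraic identity, the left-hand side telescopes into the potential increment, and the right-hand side becomes $2\int_0^\delta s^2\gamma(s)\,\cH(\mathcal{D} u_h^n,\mathcal{M} q_h^n;s)\,ds$.

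Adding these two identities, the right-hand sides combine into
$$2\int_0^\delta s^2 \gamma(s) \bigl[\, \cK(\mathcal{M} q_h^n, \mathcal{D} u_h^n; s) + \cH(\mathcal{D} u_h^n, \mathcal{M} q_h^n; s)\, \bigr]\, ds,$$
which vanishes identically by the antisymmetry relation $\cH(v,w;s)+\cK(w,v;s)=0$ of Lemma \ref{lemma1}. The sum of the telescoping left-hand sides is precisely $(E_h^{n+1}-E_h^n)/h_t$, so $E_h^{n+1}=E_h^n$. The only real care needed is the three-time-level bookkeeping and tracking of the factors of $h_t$; conceptually the argument is a transparent discrete mirror of Theorem \ref{Th: stable}, and I do not anticipate any substantive obstacle.
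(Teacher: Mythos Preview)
Your proposal is correct and follows essentially the same route as the paper: test the first equation of \eqref{eq:dis_DG} with $v_h=\mathcal{D}u_h^n$, combine the second and third equations to produce $\int_{I_j}\mathcal{D}q_h^n\,w_h\,dx=\cH_j(\mathcal{D}u_h^n,w_h;s)$ tested with $w_h=\mathcal{M}q_h^n$, then invoke Lemma~\ref{lemma1} to cancel the $\cK+\cH$ cross-terms and telescope. The only difference is cosmetic---you front-load the algebraic identities and carry an inessential factor-of-two slip in the final line (the telescoped sum equals $(E_h^{n+1}-E_h^n)/(2h_t)$, not $/h_t$), neither of which affects the argument.
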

\begin{proof}
To prove the fully discrete energy conservation \eqref{discrete_energy}, we choose the test function $v_h = \mathcal{D} u_h^n$ in the first equation of \eqref{eq:dis_DG} and sum the resulting equation over all subcells $I_j$ to obtain
\begin{equation}\label{fully1}
		\int_a^b (\mathcal{D}^2 u_h^n) (\mathcal{D} u_h^n) \ dx - 2\int_0^\delta s^2\gamma(s) \cK(\mathcal{M} q_h^n, \mathcal{D} u_h^n; s) ds = 0. 
\end{equation}

Choose the test function $w_h = \mathcal{M} q_h^{n}/h_t$ in the second and the third equation of \eqref{eq:dis_DG}, and subtract the resulting two equations, then sum them over all subcells to collect
\begin{equation}\label{fully2}
	\int_a^b 2 (\mathcal{D} q^{n}_h) (\mathcal{M}q_h^{n})\ d{x} - 2\cH (\mathcal{D} u^{n}_h, \mathcal{M} q_h^{n}; s) = 0.
\end{equation}
Next, multiply \eqref{fully2} by $s^2\gamma(s)$ and take the integral over $(0,\delta)$ in $s$, and sum the resulting equation with \eqref{fully1}, and invoke Lemma \ref{lemma1}, we arrive at
\begin{equation*}
\int_a^b (\mathcal{D}^2 u_h^n) (\mathcal{D} u_h^n) \ dx + 2\int_0^\delta s^2\gamma(s) \int_a^b (\mathcal{D} q^{n}_h) (\mathcal{M} q_h^{n})\  d{x} ds = 0,
\end{equation*}
which is equivalent to
\begin{equation*}
\begin{aligned}
& \Big\Vert \frac{u^{n+1}_h - u^n_h}{h_t} \Big\Vert_{L^2(a,b)}^2  + \int_0^\delta s^2 \gamma(s) \big( \| q^{n+1}_h \|_{L^2(a,b)}^2 + \| q^n_h \|_{L^2(a,b)}^2 \big) ds  \\
= \ & \Big\Vert  \frac{u^n_h - u^{n-1}_h}{h_t} \Big\Vert_{L^2(a,b)}^2 + \int_0^\delta s^2 \gamma(s) \big( \| q^n_h \|_{L^2(a,b)}^2 + \| q^{n-1}_h \|_{L^2(a,b)}^2 \big) ds.
 \end{aligned}
    \end{equation*}
    That is, we have $E_h^{n+1} = E_h^n$ for all $n$.
\end{proof}

\section{Numerical simulations}
\label{sec:numerical}

In this section, we present several numerical experiments to illustrate and support the theoretical studies of the proposed DG scheme. Throughout these studies, we use a standard modal basis formulation, and consider the kernel function $\gamma(s) = \frac{3 - \alpha}{2 \delta^{3 - \alpha}} |s|^{-\alpha}, 0<\alpha<3$. It is straightforward to verify that $\int^\delta_{-\delta} s^2 \gamma(s) ds = 1$ and $\gamma(s)$ is integrable when $0 < \alpha < 1$. We report numerical errors in the $L^2$-norm, which are computed as follows:
{\begin{equation*}\label{eq:errors}
e_u := \left\| u({x}, T) - u_h({x}, T)\right\|_{L^2(a,b)} = \Big(\sum_j\sum_{i=0}^{k+2} \omega_i^j\big(u({x}_i^j, T) - u_h({x}_i^j, T)\big)^2 \Big)^{1/2},
\end{equation*}
where ${x}_i^j$ are the Gauss-Labatto quadrature nodes in $I_j$ and $\omega_i^j$ are the corresponding weights.



\subsection{Convergence study}\label{sec:convergence}

We consider the following nonlocal wave equation
\begin{equation}\label{eq:prob1}
 u_{tt}(x,t) + \cL_\delta u(x,t) = f_\delta(x,t),\quad (x,t) \in (0,1)\times(0,1],
\end{equation}
with $f_\delta (x,t) = - \cos(2\pi t) \left( 4\pi^2\sin(2\pi x) +2\int_{-\delta}^\delta \gamma(s) \big( \sin(2\pi (x+s) ) - \sin(2\pi x) \big) ds \right)$. Given the initial conditions $u(x,0) = \sin(2\pi x)$ and $u_t(x,0) = 0$ together with periodic boundary conditions, the exact solution of \eqref{eq:prob1} is $u(x,t)=\cos(2\pi t)\sin(2\pi x)$.

In the numerical experiments, we uniformly discretize the spatial interval with vertices $x_j = jh$, $j = 0, \cdots,N$, where $h = 1/N$, and choose the time step size $h_t = 2\times 10^{-5}$ to guarantee that the temporal discretization error is dominated by the spatial discretization error. Throughout the study we consider the degree of the approximation space for $u_h$ to be $k \in \{0, 1, 2\}$, and consider cases where the horizon $\delta$ is fixed or $\delta/h$ is fixed.

\begin{table}[!htb]
 	\footnotesize
 \begin{center}
 		\scalebox{0.85}{
 		\begin{tabular}{c| c c c c c c c c c c c c c c}
 				\hline
 				 ~ & ~ & ~ & $\vert$ & $\alpha = 1/4$ & ~ & ~ & $\alpha = 1/2$ & ~ & ~ & $\alpha = 3/2$ & ~ &  ~& $\alpha = 5/2$ & ~ \\
 				  \cline{5-6} \cline{8-9} \cline{11-12} \cline{14-15}
     ~ & $k$ & $N$ & $\vert$ & $e_u$ & $\text{order}$ & ~ & $e_u$ & $\text{order}$ & ~ & $e_u$ & $\text{order}$ & ~ & $e_u$ & $\text{order}$  \\
 				\cline{1-15}
~ & ~ & 10 & $\vert$ &1.2721e-01 &-- & & 1.2721e-01 &   --  & ~ & 1.2721e-01 &   --   & ~ &1.2721e-01 &   --  \\
~ & ~ & 20 & $\vert$ &6.3996e-02 &0.9911 & & 6.3996e-02 & 0.9911& ~ & 6.3996e-02 & 0.9911 & ~ &6.3996e-02 & 0.9911 \\
~ & 0  & 40 & $\vert$ &3.2047e-02 &0.9978& & 3.2047e-02 & 0.9978& ~ & 3.2047e-02 & 0.9978 & ~ &3.2047e-02 & 0.9978\\
~ & ~ & 80 & $\vert$ &1.6030e-02& 0.9994 & & 1.6030e-02 & 0.9994& ~ & 1.6030e-02 & 0.9994 & ~ &1.6030e-02 & 0.9994 \\
 				    ~ & ~ & ~ & ~ & ~  & ~ & ~ & ~ & ~ & ~ & ~ & ~ & ~ & ~ \\
~ &~  & 10 & $\vert$ &1.0335e-02 &-- & & 1.0335e-02 &    --  & ~ & 1.0335e-02 &    --   & ~ &1.0335e-02 &    --  \\
~ &  ~  & 20 & $\vert$ &2.5966e-03 &1.9929 & & 2.5966e-03 & 1.9929& ~ & 2.5966e-03 & 1.9929 & ~ &2.5966e-03 & 1.9929  \\
$\delta = 10^{-5}$&1 & 40 & $\vert$ &6.4995e-04 &1.9982& & 6.4995e-04 & 1.9982& ~ & 6.4995e-04 & 1.9982& ~ &6.4995e-04 & 1.9982\\
~ & ~  & 80 & $\vert$ &1.6254e-04 &1.9996 & & 1.6254e-04 & 1.9996& ~ & 1.6254e-04 & 1.9996 & ~ &1.6254e-04 & 1.9996 \\
 		~ & ~ & ~ & ~ & ~ & ~ & ~ & ~ & ~ & ~ & ~ & ~ & ~ & ~ & ~\\
~ & ~  & 10 & $\vert$ &5.4954e-04 &--& & 5.4954e-04 &      --  & ~ & 5.4954e-04 &     --  & ~  & 5.4954e-04 &     --  \\
~ & ~  & 20 & $\vert$ &6.8965e-05 &2.9943 & & 6.8965e-05 & 2.9943& ~ & 6.8965e-05 & 2.9943& ~ & 6.8965e-05 & 2.9943  \\
~ & 2   & 40 & $\vert$ &8.6292e-06 &2.9986& & 8.6292e-06 & 2.9986& ~ & 8.6292e-06 & 2.9986 & ~ & 8.6292e-06 & 2.9986\\
~ & ~  & 80 & $\vert$ &1.0789e-06 & 2.9996 & & 1.0790e-06 & 2.9996 & ~ & 1.0789e-06 & 2.9996 & ~ &1.0789e-06 & 2.9996 \\
 				\hline
     \hline
     ~ & ~ & 10 & $\vert$ &1.2721e-01 &-- & & 1.2721e-01&  --  & ~ & 1.2721e-01&  --   & ~ &1.2721e-01 &  --  \\
~ & ~ & 20 & $\vert$ &6.3996e-02 &0.9911 & & 6.3996e-02&  0.9911& ~ & 6.3996e-02&  0.9911 & ~ &6.3996e-02 &  0.9911 \\
~ & 0  & 40 & $\vert$ &3.2047e-02 &0.9978& & 3.2047e-02&  0.9978& ~ & 3.2047e-02&  0.9978 & ~ &3.2047e-02  &0.9978\\
~ & ~ & 80 & $\vert$ &1.6030e-02 & 0.9994 & & 1.6030e-02&  0.9994& ~ & 1.6030e-02&  0.9994 & ~ &1.6030e-02  &  0.9994 \\
 				    ~ & ~ & ~ & ~ & ~  & ~ & ~ & ~ & ~ & ~ & ~ & ~ & ~ & ~ \\
~ &~  & 10 & $\vert$ &1.0335e-02 &-- & & 1.0335e-02&   --  & ~ & 1.0335e-02&   --   & ~ &1.0335e-02 &   --  \\
~ &  ~  & 20 & $\vert$ &2.5966e-03 &1.9929 & & 2.5966e-03&   1.9929& ~ &2.5966e-03&  1.9929 & ~ &2.5966e-03 &   1.9929 \\
$\delta = 0.2$&1 & 40 & $\vert$ &6.4995e-04 &1.9982 & & 6.4995e-04&   1.9982& ~ & 6.4995e-04&   1.9982& ~ &6.4995e-04  &1.9982\\
~ & ~  & 80 & $\vert$ &1.6254e-04 &1.9996 & & 1.6254e-04& 1.9996& ~ & 1.6254e-04&   1.9996 & ~ &1.6254e-04  &  1.9996 \\
 		~ & ~ & ~ & ~ & ~ & ~ & ~ & ~ & ~ & ~ & ~ & ~ & ~ & ~ & ~\\
~ & ~  & 10 & $\vert$ &5.4954e-04 &--& & 5.4954e-04&     --  & ~ & 5.4954e-04&    --  & ~  & 5.4957e-04 &    --  \\
~ & ~  & 20 & $\vert$ &6.8965e-05 &2.9943 & & 6.8965e-05&     2.9943& ~ & 6.8965e-05&    2.9943& ~ &6.8979e-05&    2.9941  \\
~ & 2   & 40 & $\vert$ &8.6292e-06 &2.9986& & 8.6292e-06&     2.9986& ~ & 8.6292e-06&    2.9986 & ~ & 8.6308e-06 &   2.9986\\
~ & ~  & 80 & $\vert$ &1.0789e-06 &2.9996 & &1.0789e-06 &2.9996      & ~ &1.0789e-06 &2.9996     & ~ & 1.0791e-06 & 2.9997  \\
 				\hline
 \end{tabular}
 		}
 \end{center}
\caption{\scriptsize{$L^2$-errors and corresponding convergence rates for $u$ in the problem \eqref{eq:prob1} when the nonlocal horizon $\delta = 10^{-5}$ or $\delta = 0.2$, using $\mathcal{P}^k$ polynomials with $k=0,1,2$. The time step size is $h_t=2\times10^{-5}$ with the terminal time $T=1$, and the spatial interval is divided into $N$ uniform cells. These results show that optimal convergence for both integrable kernel with $0<\alpha<1$, and non-integrable kernel with $\alpha\geq 1$.}}\label{tab:convergence_fixed_delta}
 \end{table}

 \begin{table}[!htb]
 	\footnotesize
 \begin{center}
 		\scalebox{0.85}{
 		\begin{tabular}{c| c c c c c c c c c c c c c c}
 				\hline
 				 ~ & ~ & ~ & $\vert$ & $\alpha = 1/4$ & ~ & ~ & $\alpha = 1/2$ & ~ & ~ & $\alpha = 3/2$ & ~ &  ~& $\alpha = 5/2$ & ~ \\
 				  \cline{5-6} \cline{8-9} \cline{11-12} \cline{14-15}
     ~ & $k$ & $N$ & $\vert$ & $e_u$ & $\text{order}$ & ~ & $e_u$ & $\text{order}$ & ~ & $e_u$ & $\text{order}$ & ~ & $e_u$ & $\text{order}$  \\
 				\cline{1-15}
~ & ~ & 10 & $\vert$ &1.2721e-01 &-- & & 1.2721e-01 &   --  & ~ & 1.2721e-01 &   --   & ~ &1.2721e-01 &   --  \\
~ & ~ & 20 & $\vert$ &6.3996e-02 &0.9911 & & 6.3996e-02 & 0.9911& ~ & 6.3996e-02 & 0.9911 & ~ &6.3996e-02 & 0.9911 \\
~ & 0  & 40 & $\vert$ &3.2047e-02 &0.9978& & 3.2047e-02 & 0.9978& ~ & 3.2047e-02 & 0.9978 & ~ &3.2047e-02 & 0.9978\\
~ & ~ & 80 & $\vert$ &1.6030e-02& 0.9994 & & 1.6030e-02 & 0.9994& ~ & 1.6030e-02 & 0.9994 & ~ &1.6030e-02 & 0.9994 \\
 				    ~ & ~ & ~ & ~ & ~  & ~ & ~ & ~ & ~ & ~ & ~ & ~ & ~ & ~ \\
~ &~  & 10 & $\vert$ &1.0335e-02 &-- & & 1.0335e-02 &    --  & ~ & 1.0335e-02 &    --   & ~ &1.0335e-02 &    --  \\
~ &  ~  & 20 & $\vert$ &2.5966e-03 &1.9929 & & 2.5966e-03 & 1.9929& ~ & 2.5966e-03 & 1.9929 & ~ &2.5966e-03 & 1.9929  \\
$\delta = h$&1 & 40 & $\vert$ &6.4995e-04 &1.9982& & 6.4995e-04 & 1.9982& ~ & 6.4995e-04 & 1.9982& ~ &6.4995e-04 & 1.9982\\
~ & ~  & 80 & $\vert$ &1.6254e-04 &1.9996 & & 1.6254e-04 & 1.9996& ~ & 1.6254e-04 & 1.9996 & ~ &1.6254e-04 & 1.9996 \\
 		~ & ~ & ~ & ~ & ~ & ~ & ~ & ~ & ~ & ~ & ~ & ~ & ~ & ~ & ~\\
~ & ~  & 10 & $\vert$ &5.4954e-04 &--& & 5.4954e-04 &      --  & ~ & 5.4954e-04 &     --  & ~  & 5.4954e-04 &     --  \\
~ & ~  & 20 & $\vert$ &6.8965e-05 &2.9943 & & 6.8965e-05 & 2.9943& ~ & 6.8965e-05 & 2.9943& ~ & 6.8965e-05 & 2.9943  \\
~ & 2   & 40 & $\vert$ &8.6292e-06 &2.9986& & 8.6292e-06 & 2.9986& ~ & 8.6292e-06 & 2.9986 & ~ & 8.6292e-06 & 2.9986\\
~ & ~  & 80 & $\vert$ &1.0789e-06 & 2.9996 & & 1.0789e-06 & 2.9996 & ~ & 1.0789e-06 & 2.9996 & ~ &1.0789e-06 & 2.9996 \\
 				\hline
     \hline
    ~ & ~ & 10 & $\vert$ &1.2721e-01 &-- & & 1.2721e-01 &   --  & ~ & 1.2721e-01 &   --   & ~ &1.2721e-01 &   --  \\
~ & ~ & 20 & $\vert$ &6.3996e-02 &0.9911 & & 6.3996e-02 & 0.9911& ~ & 6.3996e-02 & 0.9911 & ~ &6.3996e-02 & 0.9911 \\
~ & 0  & 40 & $\vert$ &3.2047e-02 &0.9978& & 3.2047e-02 & 0.9978& ~ & 3.2047e-02 & 0.9978 & ~ &3.2047e-02 & 0.9978\\
~ & ~ & 80 & $\vert$ &1.6030e-02& 0.9994 & & 1.6030e-02 & 0.9994& ~ & 1.6030e-02 & 0.9994 & ~ &1.6030e-02 & 0.9994 \\
 				    ~ & ~ & ~ & ~ & ~  & ~ & ~ & ~ & ~ & ~ & ~ & ~ & ~ & ~ \\
~ &~  & 10 & $\vert$ &1.0335e-02 &-- & & 1.0335e-02 &    --  & ~ & 1.0335e-02 &    --   & ~ &1.0335e-02 &    --  \\
~ &  ~  & 20 & $\vert$ &2.5966e-03 &1.9929 & & 2.5966e-03 & 1.9929& ~ & 2.5966e-03 & 1.9929 & ~ &2.5966e-03 & 1.9929  \\
$\delta = 3h$&1 & 40 & $\vert$ &6.4995e-04 &1.9982& & 6.4995e-04 & 1.9982& ~ & 6.4995e-04 & 1.9982& ~ &6.4995e-04 & 1.9982\\
~ & ~  & 80 & $\vert$ &1.6254e-04 &1.9996 & & 1.6254e-04 & 1.9996& ~ & 1.6254e-04 & 1.9996 & ~ &1.6254e-04 & 1.9996 \\
 		~ & ~ & ~ & ~ & ~ & ~ & ~ & ~ & ~ & ~ & ~ & ~ & ~ & ~ & ~\\
~ & ~  & 10 & $\vert$ &5.4954e-04 &--& & 5.4954e-04 &      --  & ~ & 5.4954e-04 &     --  & ~  & 5.4954e-04 &     --  \\
~ & ~  & 20 & $\vert$ &6.8965e-05 &2.9943 & & 6.8965e-05 & 2.9943& ~ & 6.8965e-05 & 2.9943& ~ & 6.8965e-05 & 2.9943  \\
~ & 2   & 40 & $\vert$ &8.6292e-06 &2.9986& & 8.6292e-06 & 2.9986& ~ & 8.6292e-06 & 2.9986 & ~ & 8.6292e-06 & 2.9986\\
~ & ~  & 80 & $\vert$ &1.0789e-06 & 2.9996 & & 1.0789e-06 & 2.9996 & ~ & 1.0789e-06 & 2.9996 & ~ &1.0789e-06 & 2.9996 \\
 				\hline
 \end{tabular}
 		}
 \end{center}
\caption{\scriptsize{$L^2$-errors and corresponding convergence rates for $u$ in the problem \eqref{eq:prob1} when $\delta = h$ or $\delta = 3h$, using $\mathcal{P}^k$ polynomials with $k=0,1,2$. The time step size is $h_t=2\times10^{-5}$ with the terminal time $T=1$, and the spatial interval is divided into $N$ uniform cells. These results show that optimal convergence for both integrable kernel with $0<\alpha<1$, and non-integrable kernel with $\alpha\geq 1$.}}\label{tab:convergence_fixed_m}
 \end{table}

 Table \ref{tab:convergence_fixed_delta} and Table \ref{tab:convergence_fixed_m} show the $L^2$-errors for $u$ with different number of elements $N=10, 20, 40, 80$. In Table \ref{tab:convergence_fixed_delta} the nonlocal horizon is fixed at $\delta = 10^{-6}$ and $\delta = 0.2$, while in Table \ref{tab:convergence_fixed_m} we set $\delta = h$ or $\delta = 3h$. The results show that the proposed scheme consistently achieves robust and optimal $(k+1)$-th order accuracy over different values of $\alpha$, which measures the singularity of the kernel function $\gamma(s)$. However, we note that when $\gamma(s)$ is not integrable, only suboptimal convergence can be proved, as discussed in Theorem \ref{theorem_asy}. A more detailed theoretical analysis of optimal convergence in this scenario is left to future work.

\subsection{Energy conservation}\label{sec:energy}

In this section, we test the energy conservative property of the proposed fully discrete DG scheme \eqref{eq:dis_DG}. To this end, we consider the following problem
\begin{equation}\label{eg:example2}
u_{tt}(x,t) + \cL_\delta(x,t) = 0, \quad (x,t) \in (0,1) \times (0, 1000],
\end{equation}
with periodic boundary conditions and initial data $u(x,0) = \sin(2\pi x), u_t(x,0) = 0$. We uniformly discretize the spatial interval with vertices $x_j = jh$, where $j = 0, \cdots,80$ and $h = 1/80$, and choose the time step size $h_t = 0.1$.

\begin{figure}
\centering
\includegraphics[width = 0.45\textwidth]{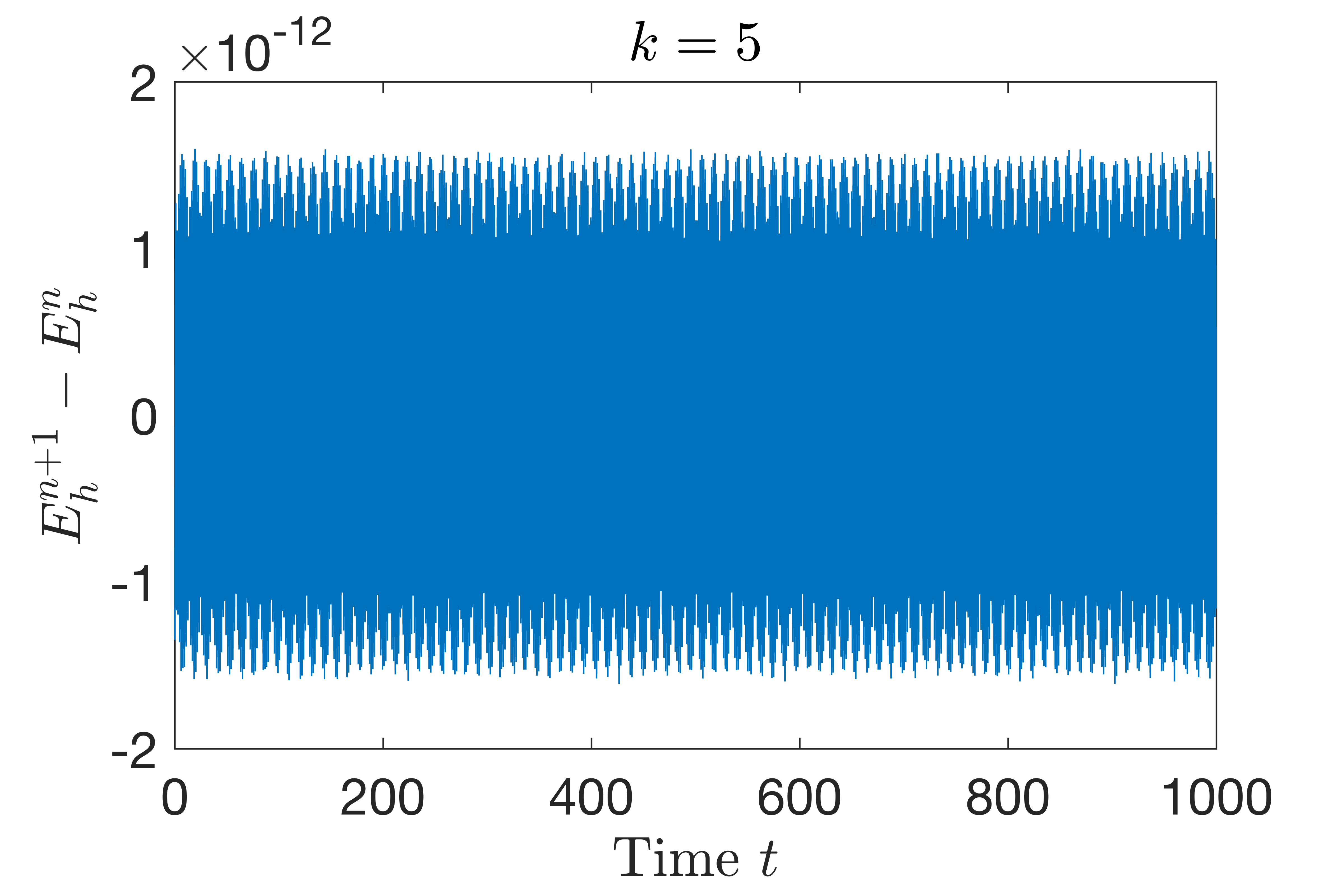}
\includegraphics[width = 0.45\textwidth]{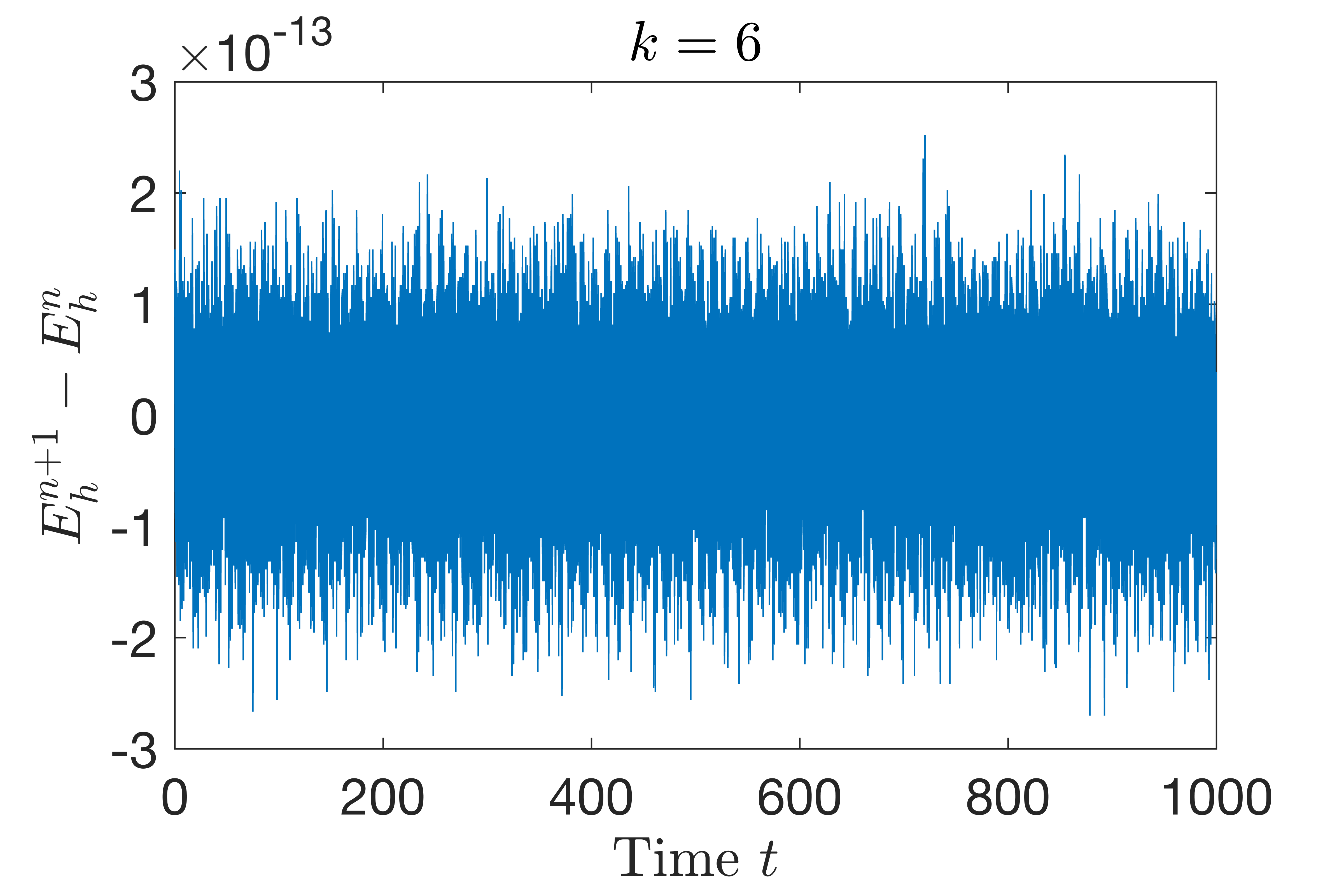}\\
\includegraphics[width = 0.45\textwidth]{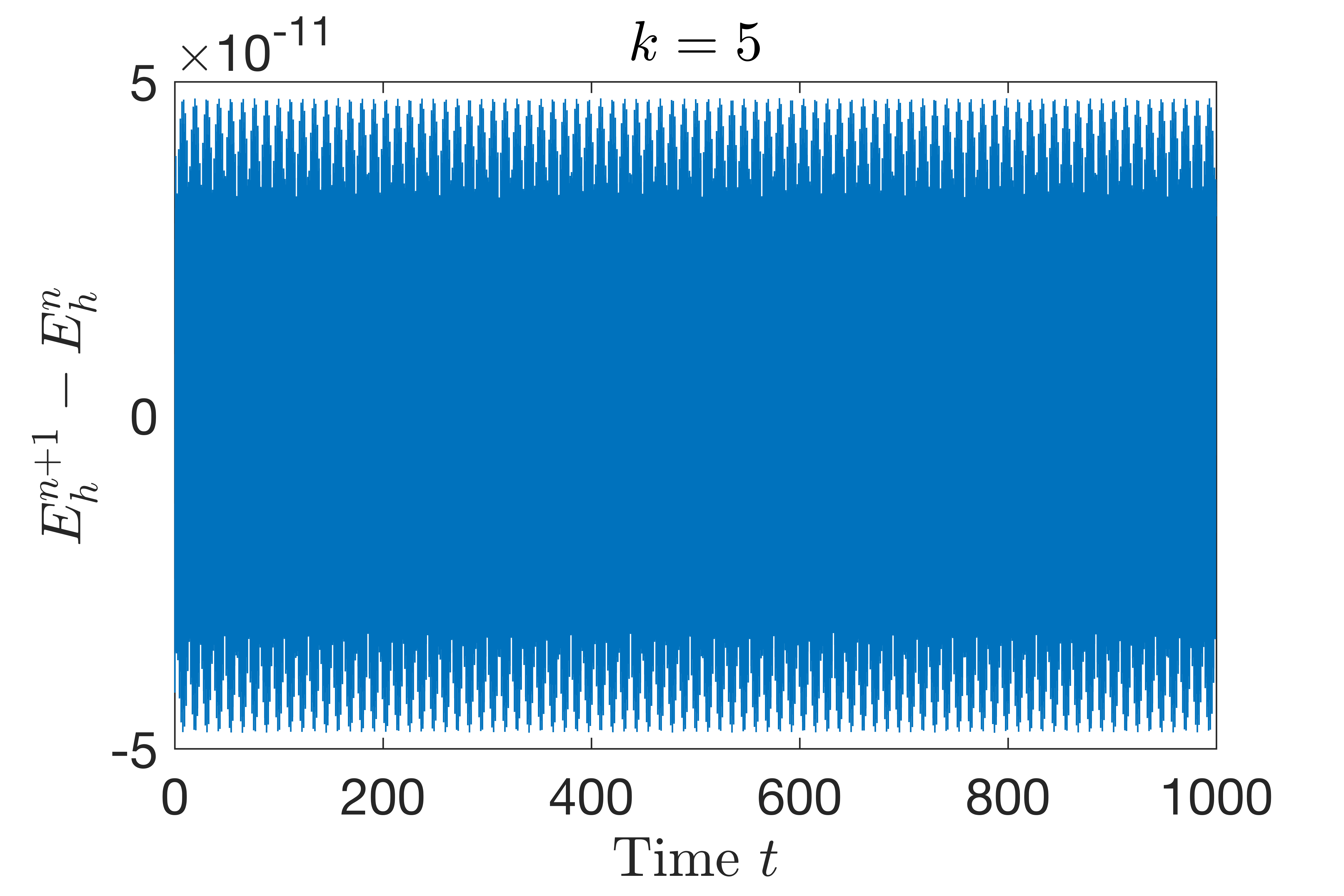}
\includegraphics[width = 0.45\textwidth]{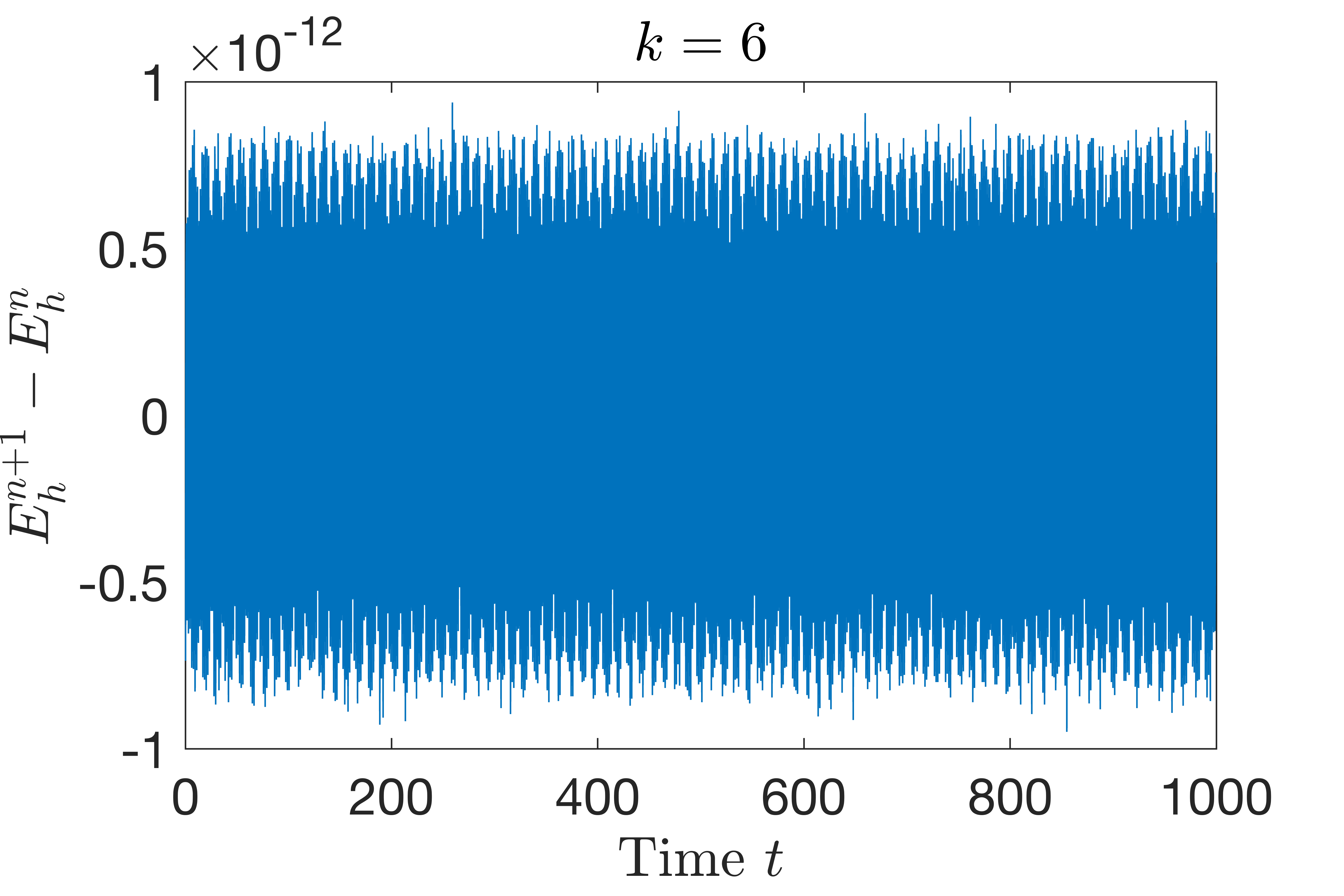}
\caption{\scriptsize{The trajectory of the numerical energy, as defined by \eqref{discrete_energy}, for the problem \eqref{eg:example2}. $\textbf{Top:}$ Result with integrable kernel $(\alpha = 2/3)$. $\textbf{Bottom:}$ Result with non-integrable kernel $(\alpha = 3/2)$ \textbf{Left:} Result with the degree of approximation being $k=5$. \textbf{Right:} Result with the degree of approximation being $k=6$.}} \label{fig:energy_history}
\end{figure}

 Figure \ref{fig:energy_history} shows the numerical energy trajectories for integrable kernel $\gamma(s)$ with $\alpha = 2/3$ (top) and non-integrable kernel $\gamma(s)$ with $\alpha = 3/2$ (bottom). The left plot shows the results for a degree of approximation of $k=5$, while the right plot corresponds to $k=6$. In all tests the nonlocal horizon is set to be $\delta = 0.025 = 2h$.

 We observe that the numerical energy is well preserved in all cases: about 12 digits of accuracy for $k=5$ and 13 digits for $k=6$ with an integrable kernel. For the non-integrable kernel, the energy conservation holds up to about 11 digits for $k=5$ and 12 digits for $k=6$. The slight deviation from machine precision (double precision is used) is due to numerical integration errors in evaluating the integrals in $E_h$. Overall, we find that numerical energy conservation improves as $k$ increases, consistent with the fact that higher-order methods have lower dispersion and dissipation errors compared to their lower-order counterparts.

\subsection{Limitting behaviour}

In this section we numerically demonstrate the behaviour of the proposed DG scheme for the nonlocal wave equations as $\delta \rightarrow 0$. We consider the problem \eqref{eg:example2} with periodic boundary conditions and initial data 
\begin{equation}\label{eq:initial_data_experi}
u(x,0) = \sin(2\pi x) \quad \text{and} \quad u_t(x,0) = 0.
\end{equation}
Let further the solution of the local problem \eqref{EQ:wave} with the initial data \eqref{eq:initial_data_experi} be denoted by $u_{\text{loc}}$, which we will use as the target solution. We then solve the problem \eqref{EQ:NW} numerically using the proposed DG scheme \eqref{EQ:DG_scheme}, with the degree of the approximation space set to be $k = 2$, a spatial discretisation of $h = 0.025$ and a time step of $h_t = 0.1$. The simulation is run to a final time of $T = 100$, with the horizon $\delta$ varying between ${3h, 2h, h, 10^{-3}, 10^{-4}, 10^{-5}}$ to mimic the limit behaviour as $\delta \rightarrow 0$. 

 Figure \ref{fig:asy_loc} presents the difference between the numerical solution of \eqref{EQ:NW}, denoted by $u_h$, and the local solution $u_{\text{loc}}$. The left panel shows the results for an integrable kernel with $\alpha = \frac{1}{2}$, while the right panel shows the results for a non-integrable kernel with $\alpha = \frac{3}{2}$. In both cases we observe that as $\delta$ decreases the numerical solution $u_h$ of the nonlocal problem converges to the local solution $u_{\text{loc}}$. This demonstrates the asymptotic compatibility of the proposed DG scheme for nonlocal wave equations and confirms its accuracy for both integrable and nonintegrable kernels.

\begin{figure}
\centering
\includegraphics[width = 0.45\textwidth]{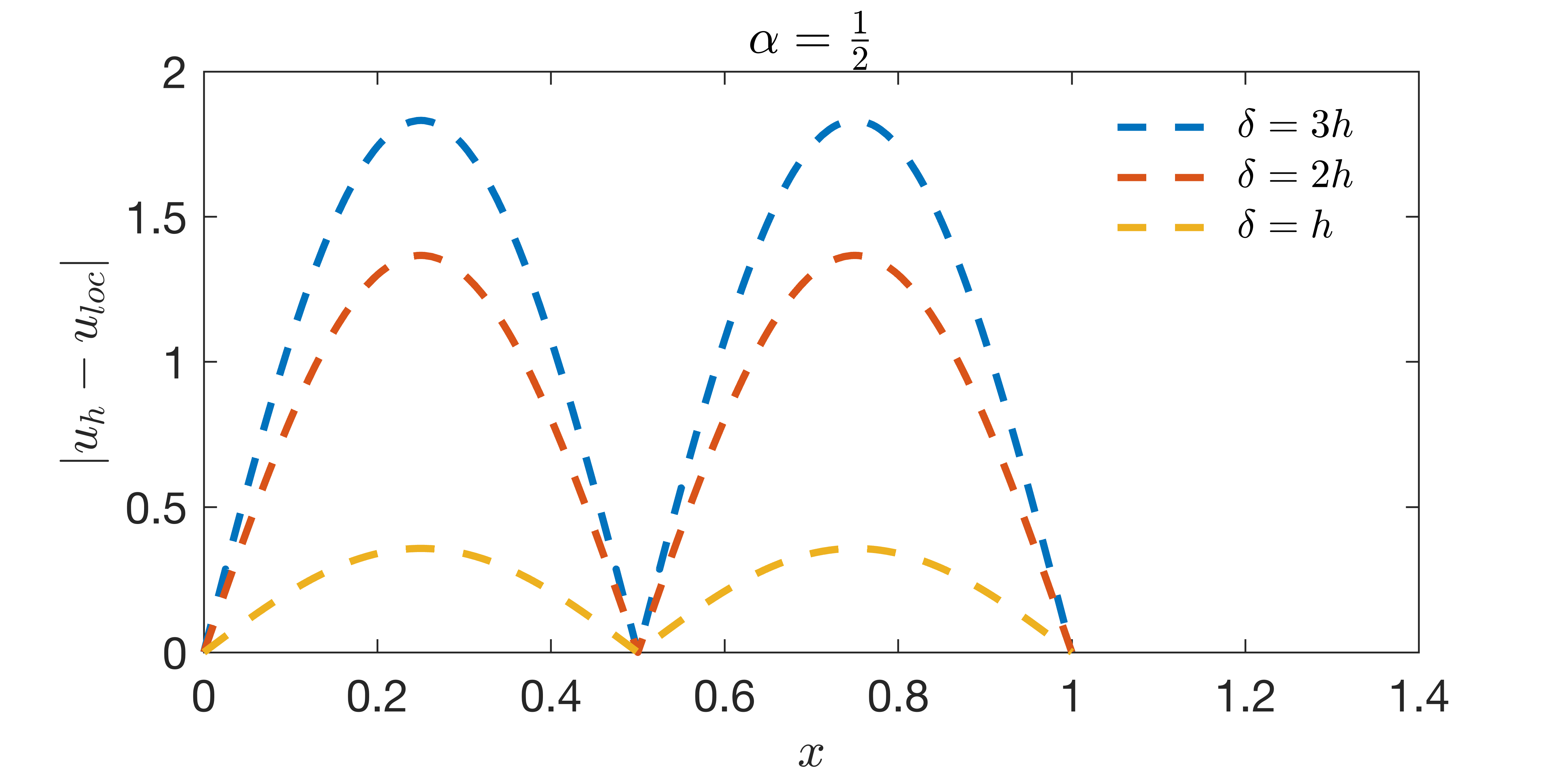} 
\quad \includegraphics[width = 0.45\textwidth]{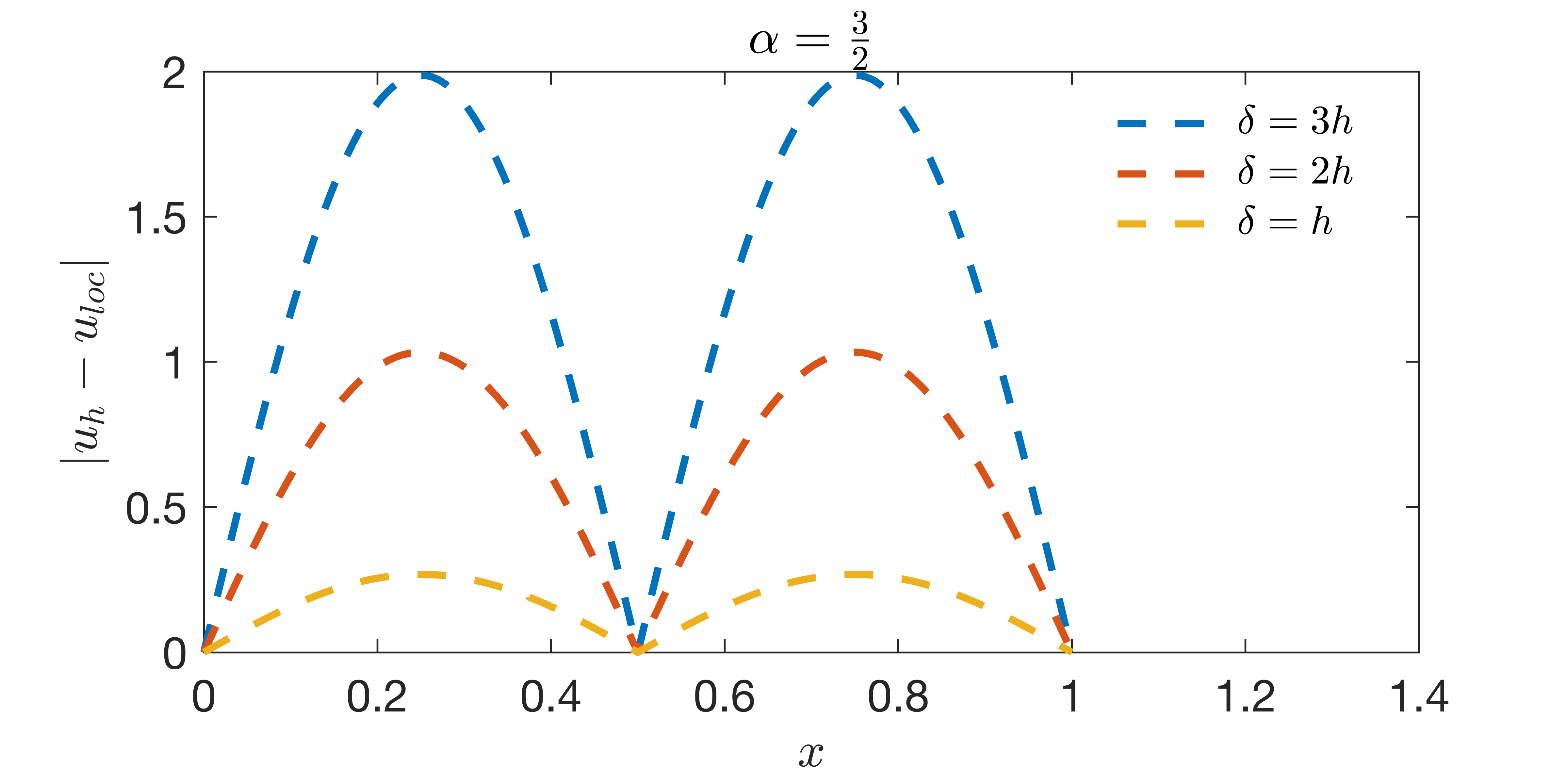}\\
\includegraphics[width = 0.45\textwidth]{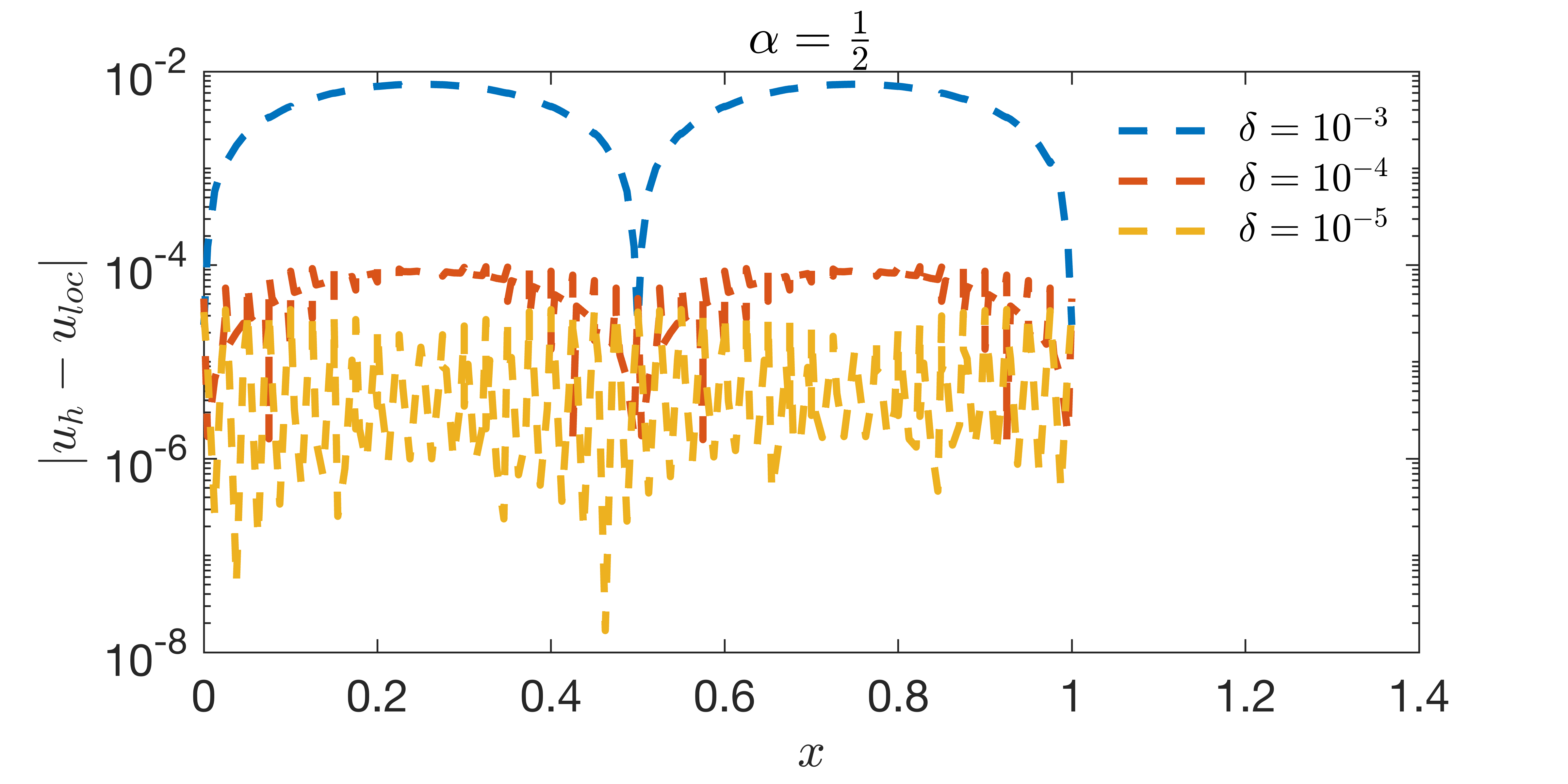}\quad 
\includegraphics[width = 0.45\textwidth]{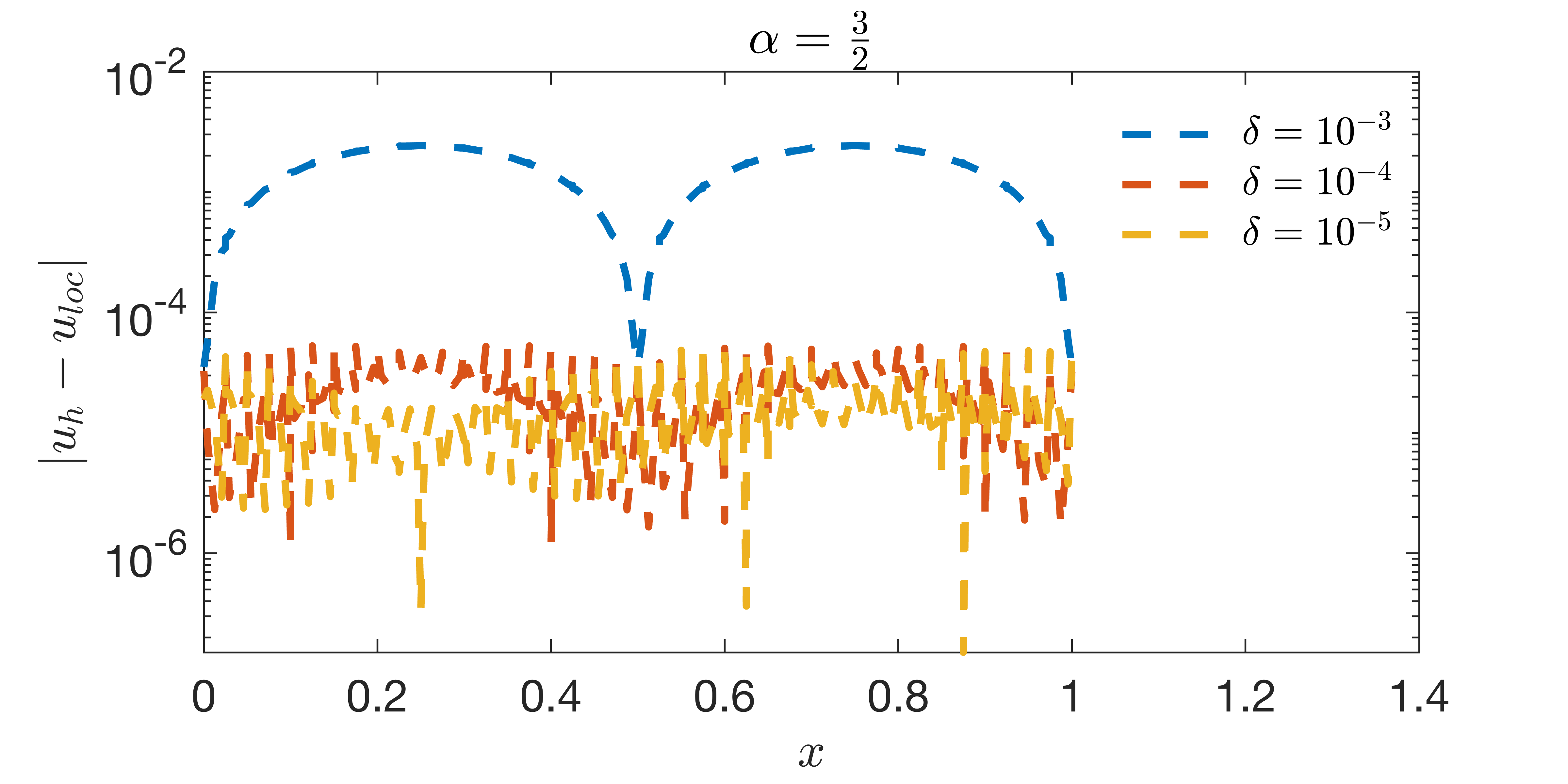}
\caption{\scriptsize{The difference between the numerical solution of the nonlocal problem \eqref{EQ:NW} and the local wave equation \eqref{EQ:wave}. For the spatial discretization we set $h=0.025$ and for the time discretization we use $h_t=0.01$, up to a final time of $T=100$ with the degree of the aproximation space being $k=2$. \textbf{left:} Results for the problem with an integrable kernel of $\alpha = \frac{1}{2}$. \textbf{right:} Results for the problem with a kernel.  }} \label{fig:asy_loc}
\end{figure}

Finally, it is also worth noting that the convergence of the nonlocal wave equation with the nonlocal operator $\cL_\delta$ to its local counterpart is theoretically expected to have an optimal order of $\cO(\delta^2)$ in the maximum norm (see, e.g., \cite{YouLuTaskYu2020}). Indeed, let $u$ and $u_{\text{loc}}$ denote the solutions of the nonlocal problem \eqref{EQ:NW} and the local problem \eqref{EQ:wave}, respectively. When $s^2\gamma(s)\in L_{\text{loc}}^1(\mathbb{R})$, through a straightforward calculation one can show that
\[
u = \Big(\cI - \frac{1}{12}\int_0^\delta s^4\gamma(s) ds\big(\partial_{tt} - \partial_{xx}\big)^{-1} \partial_{xxxx} + \cO(\delta^4)\Big)\big(\partial_{tt} - \partial_{xx}\big)^{-1} \big(u_0'' + \cO(\delta^2)\big) = u_{\text{loc}} + \cO(\delta^2),
\]
where $u_0 = u(x,0)$ denotes the initial data, $''$ represents the second-order derivative with respect to the spatial variable, and $\mathcal{I}$ is the identity operator. Table \ref{tab:convergence_delta} shows the $L^\infty$ norm of the difference between the numerical solution of \eqref{EQ:NW} and the solution of the local wave equation \eqref{EQ:wave} for different horizon values $\delta$. The numerical settings for the spatial and temporal discretization are identical to those used in Figure \ref{fig:asy_loc}. The results show a second order convergence rate for both integrable and non-integrable kernels which agrees with the theoretical findings.

\begin{table}[!htb]
\begin{center}
    \scalebox{1}{
    \begin{tabular}{c c c c c}
                    \hline
                    & $\alpha = \frac{1}{2}$ &  & $\alpha = \frac{3}{2}$ & \\
\hline
 				\hline
 			 $\delta (\times 10^{-2})$  & $\| u_h - u_{loc} \|_{L^\infty}$ & order & $\| u_h - u_{loc} \|_{L^\infty}$ & order \\
 				  \cline{1-5}
               $2^{-1} $ & 1.4586e-01 & -- & 5.8714e-02 & -- \\
            $2^{-2}$ &  4.2314e-02 & 1.7854 & 1.4877e-02 & 1.9806 \\
                  $2^{-3}$ & 1.1378e-02 & 1.8949 & 3.7614e-03 & 1.9837 \\
               $2^{-4}$ & 2.9778e-03 & 1.9339 & 9.7053e-04 & 1.9544 \\
            $2^{-5}$ & 7.8271e-04 & 1.9277 & 2.5810e-05 & 1.9109 \\
 				\hline
    \end{tabular}
 		}
\end{center}
\caption{\scriptsize{$L^\infty$ errors and the corresponding convergence rates of the nonlocal solution $u_h$ to the local solution $u_{\text{loc}}$. The numerical settings are identical to those used in Figure \ref{fig:asy_loc}.}}\label{tab:convergence_delta}
\end{table}

\section{Concluding remarks}
\label{sec:conclusion}

To summarize, we have developed and analyzed a DG method for solving a nonlocal wave equation in one-dimensional space. By introducing an auxiliary variable, analogous to the gradient field in local DG methods for classical PDEs, we have reformulated the problem into a system. The proposed scheme uses a DG method for spatial discretization and the Crank-Nicolson method for time integration. We established the stability of the scheme and derived optimal $L^2$-error estimates for integrable kernels. In addition, we proved that the DG scheme recovers the local DG method in the limit as the horizon $\delta \rightarrow 0$. Furthermore, we show that the fully discrete scheme, when combined with the Crank-Nicolson time integrator, preserves energy conservation. Numerical experiments confirm our theoretical results.

This work also opens up several directions for future research. A key extension is to develop a more comprehensive error analysis for the fully discrete scheme. Another important direction is to extend the proposed method to nonlinear problems that capture physical phenomena beyond the linear setting. In addition, the study of more general nonlocal initial and boundary value problems, including nonlocal Dirichlet and Neumann-type models, would further enhance the applicability of the method. Furthermore, the study of nonlinear nonlocal dynamics and coupled local-nonlocal systems could provide valuable insights into the modeling of complex physical phenomena. We leave these challenges for future studies.

\section*{Acknowledgments}

This work is partially supported by the National Science Foundation through grants DMS-1913309, DMS-1937254, DMS-2309802, and DMS-2513924.  

{\small
\bibliography{BIB-REN}

\begin{thebibliography}{10}

\bibitem{Aksoylu-JPNM23}
{\sc B.~Aksoylu}, {\em On four mutual properties of classical and nonlocal wave equations}, Journal of Peridynamics and Nonlocal Modeling, 5 (2023), pp.~60--80.

\bibitem{AksoyluMengesha2010}
{\sc B.~Aksoylu and T.~Mengesha}, {\em Results on nonlocal boundary value problems}, Numerical functional analysis and optimization, 31 (2010), pp.~1301--1317.

\bibitem{alali2020fourier}
{\sc B.~Alali and N.~Albin}, {\em Fourier spectral methods for nonlocal models}, Journal of Peridynamics and Nonlocal Modeling, 2 (2020), pp.~317--335.

\bibitem{alali2021fourier}
\leavevmode\vrule height 2pt depth -1.6pt width 23pt, {\em Fourier multipliers for nonlocal laplace operators}, Applicable Analysis, 100 (2021), pp.~2526--2546.

\bibitem{bates1999integrodifferential}
{\sc P.~W. Bates and A.~Chmaj}, {\em An integrodifferential model for phase transitions: stationary solutions in higher space dimensions}, Journal of statistical physics, 95 (1999), pp.~1119--1139.

\bibitem{beyer2016class}
{\sc H.~R. Beyer, B.~Aksoylu, and F.~Celiker}, {\em On a class of nonlocal wave equations from applications}, Journal of Mathematical Physics, 57 (2016).

\bibitem{bobaru2010peridynamic}
{\sc F.~Bobaru and M.~Duangpanya}, {\em The peridynamic formulation for transient heat conduction}, International Journal of Heat and Mass Transfer, 53 (2010), pp.~4047--4059.

\bibitem{buades2010image}
{\sc A.~Buades, B.~Coll, and J.-M. Morel}, {\em Image denoising methods. a new nonlocal principle}, SIAM review, 52 (2010), pp.~113--147.

\bibitem{ChGo-ESAIM18}
{\sc F.~A. Chiarello and P.~Goatin}, {\em Global entropy weak solutions for general non-local traffic flow models with anisotropic kernel}, ESAIM: Mathematical Modelling and Numerical Analysis, 52 (2018), pp.~163--180.

\bibitem{ChouShuXing2014}
{\sc C.-S. Chou, C.-W. Shu, and Y.~Xing}, {\em Optimal energy conserving local discontinuous {Galerkin} methods for second-order wave equation in heterogeneous media}, Journal of Computational Physics, 272 (2014), pp.~88--107.

\bibitem{ciarlet2002finite}
{\sc P.~G. Ciarlet}, {\em The finite element method for elliptic problems}, SIAM, 2002.

\bibitem{cockburn2004discontinuous}
{\sc B.~Cockburn}, {\em Discontinuous {Galerkin} methods for computational fluid dynamics}, Encyclopedia of Computational Mechanics,  (2004).

\bibitem{CockburnGopalakrishnanLazarov2009}
{\sc B.~Cockburn, J.~Gopalakrishnan, and R.~Lazarov}, {\em Unified hybridization of discontinuous {Galerkin}, mixed, and continuous {Galerkin} methods for second order elliptic problems}, SIAM Journal on Numerical Analysis, 47 (2009), pp.~1319--1365.

\bibitem{coclite2020numerical}
{\sc G.~M. Coclite, A.~Fanizzi, L.~Lopez, F.~Maddalena, and S.~F. Pellegrino}, {\em Numerical methods for the nonlocal wave equation of the peridynamics}, Applied Numerical Mathematics, 155 (2020), pp.~119--139.

\bibitem{dang2024regularity}
{\sc T.~Dang, B.~Alali, and N.~Albin}, {\em Regularity of solutions for the nonlocal wave equation on periodic distributions}, arXiv:2408.00912,  (2024).

\bibitem{DayalBhattacharya2007}
{\sc K.~Dayal and K.~Bhattacharya}, {\em A real-space non-local phase-field model of ferroelectric domain patterns in complex geometries}, Acta Materialia, 55 (2007), p.~1907–1917.

\bibitem{du2012analysis}
{\sc Q.~Du, M.~Gunzburger, R.~B. Lehoucq, and K.~Zhou}, {\em Analysis and approximation of nonlocal diffusion problems with volume constraints}, SIAM review, 54 (2012), pp.~667--696.

\bibitem{DuHaZhZh-SIAM18}
{\sc Q.~Du, H.~Han, J.~Zhang, and C.~Zheng}, {\em Numerical solution of a two-dimensional nonlocal wave equation on unbounded domains}, SIAM Journal on Scientific Computing, 40 (2018), pp.~A1430--A1445.

\bibitem{du2018numerical}
\leavevmode\vrule height 2pt depth -1.6pt width 23pt, {\em Numerical solution of a two-dimensional nonlocal wave equation on unbounded domains}, SIAM Journal on Scientific Computing, 40 (2018), pp.~A1430--A1445.

\bibitem{DuJuLu-MC19}
{\sc Q.~Du, L.~Ju, and J.~Lu}, {\em A discontinuous {Galerkin} method for one-dimensional time-dependent nonlocal diffusion problems}, Mathematics of Computation, 88 (2019), pp.~123--147.

\bibitem{DuJuLuTian-CAMC20}
{\sc Q.~Du, L.~Ju, J.~Lu, and X.~Tian}, {\em A discontinuous {G}alerkin method with penalty for one-dimensional nonlocal diffusion problems}, Communications on Applied Mathematics and Computation,  (2020), pp.~31--55.

\bibitem{DuJuLuTian-ESAIM24}
\leavevmode\vrule height 2pt depth -1.6pt width 23pt, {\em Numerical analysis of a class of penalty discontinuous {G}alerkin methods for nonlocal diffusion problems}, ESAIM: Mathematical Modelling and Numerical Analysis, 58 (2024), pp.~2035--2059.

\bibitem{du2019asymptotically}
{\sc Q.~Du, Y.~Tao, X.~Tian, and J.~Yang}, {\em Asymptotically compatible discretization of multidimensional nonlocal diffusion models and approximation of nonlocal {Green’s} functions}, IMA Journal of numerical analysis, 39 (2019), pp.~607--625.

\bibitem{DuZh-JMMS23}
{\sc Y.~Du and J.~Zhang}, {\em On perfectly matched layers of nonlocal wave equations in unbounded multiscale media}, Journal of Mechanics of Materials and Structures, 17 (2023), pp.~343--364.

\bibitem{du2019convergence}
{\sc Y.~Du, L.~Zhang, and Z.~Zhang}, {\em Convergence analysis of a discontinuous {G}alerkin method for wave equations in second-order form}, SIAM J. Numer. Anal., 57 (2019), pp.~238--265.

\bibitem{d2020numerical}
{\sc M.~D’Elia, Q.~Du, C.~Glusa, M.~Gunzburger, X.~Tian, and Z.~Zhou}, {\em Numerical methods for nonlocal and fractional models}, Acta Numerica, 29 (2020), pp.~1--124.

\bibitem{EmWe-MMS07}
{\sc E.~Emmrich and O.~Weckner}, {\em Analysis and numerical approximation of an integro-differential equation modeling non-local effects in linear elasticity}, Mathematics and mechanics of solids, 12 (2007), pp.~363--384.

\bibitem{EmWe-CMS07}
\leavevmode\vrule height 2pt depth -1.6pt width 23pt, {\em On the well-posedness of the linear peridynamic model and its convergence towards the {Navier} equation of linear elasticity}, Commun. Math. Sci., 5 (2007), pp.~851--864.

\bibitem{fife2003some}
{\sc P.~Fife}, {\em Some nonclassical trends in parabolic and parabolic-like evolutions}, Trends in nonlinear analysis,  (2003), pp.~153--191.

\bibitem{foss2016differentiability}
{\sc M.~Foss and P.~Radu}, {\em Differentiability and integrability properties for solutions to nonlocal equations}, in New Trends in Differential Equations, Control Theory and Optimization: Proceedings of the 8th Congress of Romanian Mathematicians, World Scientific, 2016, pp.~105--119.

\bibitem{gilboa2009nonlocal}
{\sc G.~Gilboa and S.~Osher}, {\em Nonlocal operators with applications to image processing}, Multiscale Modeling \& Simulation, 7 (2009), pp.~1005--1028.

\bibitem{GroteSchneebeliSchotzau2006}
{\sc M.~J. Grote, A.~Schneebeli, and D.~Sch\"otzau}, {\em Discontinuous {Galerkin} finite element method for the wave equation}, SIAM Journal on Numerical Analysis, 44 (2006), pp.~2408--2431.

\bibitem{guan2015stability}
{\sc Q.~Guan and M.~Gunzburger}, {\em Stability and accuracy of time-stepping schemes and dispersion relations for a nonlocal wave equation}, Numerical Methods for Partial Differential Equations, 31 (2015), pp.~500--516.

\bibitem{HeShSeCySi-IJNME23}
{\sc A.~Hermann, A.~Shojaei, P.~Seleson, C.~J. Cyron, and S.~A. Silling}, {\em Dirichlet-type absorbing boundary conditions for peridynamic scalar waves in two-dimensional viscous media}, International Journal for Numerical Methods in Engineering,  (2023).

\bibitem{nguyen2011high}
{\sc N.~C. Nguyen, J.~Peraire, and B.~Cockburn}, {\em High-order implicit hybridizable discontinuous {Galerkin} methods for acoustics and elastodynamics}, Journal of Computational Physics, 230 (2011), pp.~3695--3718.

\bibitem{ReHi-LASL73}
{\sc W.~Reed and T.~Hill}, {\em Triangular mesh methods for the neutron transport equation,}, Los Alamos Scientific Laboratory report LA-UR-73-479,  (1973).

\bibitem{RenLuZhou2024}
{\sc K.~Ren, L.~Zhang, and Y.~Zhou}, {\em An energy-based discontinuous {Galerkin} method for the nonlinear {Schr\"{o}dinger} equation with wave operator}, SIAM Journal on Numerical Analysis, 62 (2024), pp.~2459--2483.

\bibitem{rosasco2010learning}
{\sc L.~Rosasco, M.~Belkin, and E.~De~Vito}, {\em On learning with integral operators.}, Journal of Machine Learning Research, 11 (2010).

\bibitem{SachsSchu2013}
{\sc E.~W. Sachs and M.~Schu}, {\em A priori error estimates for reduced order models in finance}, ESAIM: Mathematical Modelling and Numerical Analysis, 47 (2013), pp.~449--469.

\bibitem{Silling2000}
{\sc S.~A. Silling}, {\em Reformulation of elasticity theory for discontinuities and long-range forces}, Journal of the Mechanics and Physics of Solids, 48 (2000), pp.~175--209.

\bibitem{tian2017conservative}
{\sc H.~Tian, L.~Ju, and Q.~Du}, {\em A conservative nonlocal convection--diffusion model and asymptotically compatible finite difference discretization}, Computer Methods in Applied Mechanics and Engineering, 320 (2017), pp.~46--67.

\bibitem{tian2013analysis}
{\sc X.~Tian and Q.~Du}, {\em Analysis and comparison of different approximations to nonlocal diffusion and linear peridynamic equations}, SIAM Journal on Numerical Analysis, 51 (2013), pp.~3458--3482.

\bibitem{TiDu-SIAM14}
\leavevmode\vrule height 2pt depth -1.6pt width 23pt, {\em Asymptotically compatible schemes and applications to robust discretization of nonlocal models}, SIAM J. Numer. Anal., 52 (2014), pp.~1641--1665.

\bibitem{tian2015nonconforming}
\leavevmode\vrule height 2pt depth -1.6pt width 23pt, {\em Nonconforming discontinuous galerkin methods for nonlocal variational problems}, SIAM Journal on Numerical Analysis, 53 (2015), pp.~762--781.

\bibitem{vishwanathan2010graph}
{\sc S.~V.~N. Vishwanathan, N.~N. Schraudolph, R.~Kondor, and K.~M. Borgwardt}, {\em Graph kernels}, The Journal of Machine Learning Research, 11 (2010), pp.~1201--1242.

\bibitem{WaYaZh-arXiv22}
{\sc J.~Wang, J.~Z. Yang, and J.~Zhang}, {\em Stability and convergence analysis of high-order numerical schemes with {DtN}-type absorbing boundary conditions for nonlocal wave equations}, arXiv:2211.04307,  (2022).

\bibitem{wangperi}
{\sc L.~Wang, J.~Xu, and J.~Wang}, {\em A peridynamic framework and simulation of non-{Fourier} and nonlocal heat conduction}, International Journal of Heat and Mass Transfer, 118 (2018), pp.~1284--1292.

\bibitem{weckner2005effect}
{\sc O.~Weckner and R.~Abeyaratne}, {\em The effect of long-range forces on the dynamics of a bar}, Journal of the Mechanics and Physics of Solids, 53 (2005), pp.~705--728.

\bibitem{YouLuTaskYu2020}
{\sc H.~You, X.~Lu, N.~Task, and Y.~Yu}, {\em An asymptotically compatible approach for {Neumann}-type boundary condition on nonlocal problems}, ESAIM: Mathematical Modelling and Numerical Analysis, 54 (2020), pp.~1373--1413.

\bibitem{ZhHuDuZh-SIAM17}
{\sc C.~Zheng, J.~Hu, Q.~Du, and J.~Zhang}, {\em Numerical solution of the nonlocal diffusion equation on the real line}, SIAM Journal on Scientific Computing, 39 (2017), pp.~A1951--A1968.

\end{thebibliography}
\bibliographystyle{siam}
}


\end{document}